\documentclass[11pt]{article}
\usepackage{etex}
\usepackage{amsmath,amssymb,amsfonts,amsthm}
\usepackage{a4wide}
\usepackage{color}
\usepackage{verbatim}
\usepackage{epsfig,subfigure,epstopdf,float}
\usepackage[]{graphics}
\usepackage{graphicx,inputenc}
\usepackage{subfigure}
\usepackage[justification=centering]{caption}
\usepackage{pictex}
\usepackage{wrapfig}
\usepackage{multirow}
\captionsetup[figure]{font=small,labelfont=small}
\captionsetup[figure]{font=footnotesize,labelfont=footnotesize}


\usepackage[T1]{fontenc}
\usepackage{authblk}
\usepackage[textwidth=6.0in,textheight=9in]{geometry}
\usepackage[linesnumbered,ruled,vlined, ruled,vlined]{algorithm2e}

\usepackage[colorlinks,linktocpage,linkcolor=blue]{hyperref}
\usepackage{fancyhdr}

\setlength{\parindent}{4em}
\setlength{\parskip}{1em}

\newtheoremstyle{exampstyle}
  {\topsep} 
  {\topsep} 
  {\itshape} 
  {} 
  {\bfseries} 
  {.} 
  {.5em} 
  {} 

\theoremstyle{exampstyle}
\numberwithin{equation}{section}
\newtheorem{theorem}{Theorem}
\newtheorem{lemma}{Lemma}[section]

\newtheorem{remark}[lemma]{Remark}

\newtheorem{corollary}[lemma]{Corollary}

\allowdisplaybreaks
\usepackage{parskip}
\let\oldref\ref
\renewcommand{\ref}[1]{(\oldref{#1})}  
\renewcommand{\eqref}[1]{(\oldref{#1})}


\newbox\boxaddrone \newbox\boxaddrtwo

\newcommand{\N}{\mathcal{N}}

\newcommand{\ba}{\begin{eqnarray*}}
\newcommand{\ea}{\end{eqnarray*}}
\newcommand{\be}{\begin{equation}}
\newcommand{\ee}{\end{equation}}
\newcommand{\bea}{\begin{eqnarray}}
\newcommand{\eea}{\end{eqnarray}}



\newbox\boxaddrone \newbox\boxaddrtwo

\def\N+{n\in\mathbb{N}^{+}}

\def\1d{\mathcal{D}((-\Delta)^{\gamma_1+1/2})}
\def\2d{\mathcal{D}((-\Delta)^{\gamma_2+1})}

\begin{document}
\title{Simultaneous uniqueness and numerical inversion for an inverse problem in the time-domain diffuse optical tomography with fluorescence}

\author[1]{Zhiyuan Li}
\author[2,3,4]{Chunlong Sun\thanks{Corresponding author: Dr. Chunlong Sun, email: sunchunlong@nuaa.edu.cn}}
\affil[1]{School of Mathematics and Statistics, Ningbo University, Ningbo 315211, Zhejiang, China}
\affil[2]{School of Mathematics, Nanjing University of Aeronautics and Astronautics, Nanjing 211106, Jiangsu, China}
\affil[3]{Key Laboratory of Mathematical Modelling and High Performance Computing of Air Vehicles (NUAA), MIIT, Nanjing, 211106, China}
\affil[4]{Nanjing Center for Applied Mathematics, Nanjing 211135, Jiangsu, China}

\maketitle

\begin{abstract}
\noindent In this work, an inverse problem on the determination of multiple coefficients arising from the time-domain diffuse optical tomography with fluorescence (DOT-FDOT) is investigated. We simultaneously recover the distribution of background absorption coefficient, photon diffusion coefficient as well as the fluorescence absorption in biological tissue by the time-dependent boundary measurements. We build the uniqueness theorem of this multiple coefficients simultaneous inverse problem. After that, the numerical inversions are considered. We introduce an accelerated Landweber iterative algorithm and give several numerical examples illustrating the performance of the proposed inversion schemes.\\

\noindent Keywords: 
diffusion equation, simultaneous inverse problem, single measurement, uniqueness, 
\\

\noindent AMS Subject Classifications: 35R30, 65M32.
\end{abstract}

\section{Introduction.}
\setcounter{equation}{0}
\subsection{Background and literature.} 
Optical tomography is rapidly gaining acceptance as an important diagnostic and monitoring tool of symptoms in medical applications \cite{NitziachristoV02, Mycek03, Rudin13}. Compared to X-ray computed tomography (CT), magnetic resonance imaging (MRI) and positron emission tomography (PET), optical tomography  possesses some advantages like low cost and portable, and therefore it has drawn more and more attention. 
Light propagation in highly scattering medium, such as biological tissue, is dominated by multiple scattering, and can be governed by the Boltzmann radiative transfer equation (RTE) \cite{Durduran10,Jiang11,Marttelli}.
However, the cost of solving RTE is extremely high, and therefore people approximately expand the radiant intensity as an isotropic {\it photon density} plus a small {\it photon flux}, and sequentially the transport equation can be reduced to a diffusion equation. The diffusion equation can be a sufficiently accurate approximation to the RTE, and has been applied for the quantitative analysis of optical properties of random media \cite{Arridge99,Arr09} due to its much lower computation than RTE. This kind of analysis includes diffuse optical spectroscopy (DOS) and diffuse optical tomography (DOT). Suppose the medium has the diffusion coefficient $D$ and absorption coefficient $\mu_a$. Both of DOS and DOT are to identify the unknown $(D,\mu_a)$ from the information of light measured on the boundary. 

Fluorescence diffuse optical tomography (FDOT) is one type of diffuse optical tomography that uses fluorescence light from fluorophores in biological tissue, for which two processes are coupled, namely, {\it excitation} and {\it emission} ({\it fluorescence}). The excitation photons injected on the boundary of the medium propagate to the fluorophores and then some photons are absorbed by them which excite the fluorophore molecules. After a moment of the absorption, the fluorophores emit other photons, fluorescence, at more longer wavelength than the wavelength of the excitation photons. Let $\Omega\subset\mathbb R^d$ be the background medium with its boundary $\partial\Omega$. We denote $\mu_f$ the distributions of fluorophores inside $\Omega$. FDOT is a method to achieve imaging of $\mu_f$ from the measured fluorescence on boundary. However, most existing works focus on the linear FDOT problem, which is predicated on knowing the background absorption $\mu_a$ and diffusion coefficient $D$, and the nonlinear dependence of excitation on $\mu_f$ is ignored. In this work, we study diffuse optical tomography with fluorescence (DOT-FDOT) in a more general framework where $(D,\mu_a)$ are unknown and the excitation also depends on $\mu_f$. It is very nature to consider this simultaneous inverse problem but really challenging.

Let $u_e(x,t)$ and $u_m(x,t)$ be the photon density of excitation light and emission light, respectively. As a forward problem for DOT-FDOT, we consider the following initial-boundary value problems for $u_e$ and $u_m$
\begin{equation}\label{excitation-Ue}
\begin{cases}
\begin{aligned}
c^{-1}\partial_t u_e -\text{div}(D(x)\nabla u_e) +(\mu_a(x)+\mu_f(x))u_e&=0,  &\quad& (x,t)\in\Omega\times (0,T),\\
u_e&=0,   &\quad&  (x,t)\in\Omega\times\{0\},\\
u_e&=g, &\quad& (x,t)\in\partial\Omega\times(0,T),
\end{aligned}
\end{cases}
\end{equation}
and
\begin{equation}\label{emission-um}
\begin{cases}
\begin{aligned}
c^{-1}\partial_t u_m-\text{div}(D(x)\nabla u_m) +\mu_a(x) u_m&=\mu_f(x) u_e, && (x,t)\in\Omega\times (0,T),\\
u_m&=0, && (x,t)\in\Omega\times \{0\},\\
u_m&=0, && (x,t)\in\partial\Omega\times(0,T).
\end{aligned}
\end{cases}
\end{equation}
Here 
$c>0$ is the speed of light in the medium; $D$ is the photon diffusion coefficient defined by $D:=1/(3\mu_s')$, where $\mu_s'>0$ is the reduced scattering coefficient. The DOT-FDOT in this work is formulated as follows.

{\bf Inverse problem:} Let $\Gamma_{\rm in}$ and $\Gamma$ be open sub-boundaries of $\partial\Omega$ satisfying 
\begin{equation}\label{obser-gamma}
\Gamma_{\rm in}\cup \Gamma = \partial\Omega,\quad \Gamma_{\rm in} \cap \Gamma\neq\emptyset.
\end{equation}
Suppose $D, \mu_a$ and $\mu_f$ are unknown in \eqref{excitation-Ue}-\eqref{emission-um}. By a suitable choice of the input $g$ supported on $\Gamma_{\rm in}\times[0,T]$, we consider to recover the $(D,\mu_a,\mu_f)$ simultaneously from the time-dependent measurements on the sub-boundary 
\begin{equation}\label{measure}
D\frac{\partial u_e}{\partial\nu}|_{\Gamma\times(0,T)}, \quad D\frac{\partial u_m}{\partial\nu}|_{\Gamma\times(0,T)},
\end{equation}
where $\nu=\nu(x)$ is the outward unit normal vector at $x\in\Gamma$. 

For these optical imaging problems, it is also possible to collect the observation data in the frequency domain at one or several modulation frequencies \cite{Arridge99}. Further, to overcome the ill-posedness of the inverse problems, the necessary regularization techniques such as Tikhonov regularization, sparse regularization methods and  hybrid regularization methods have been introduced  \cite{Correia10,Dutta12,Liu20}. Especially, the sparse regularization methods have additional advantages in promoting sparsity and higher spatial resolution for the cases that the target is relatively small compared to the background.  Above all, there already many works focus on improving the quality of the reconstruction and the reconstruction speed. However, in the aspect of theoretical analysis, especially the uniqueness of DOT-FDOT has not been rigorously investigated as far as we know. In this work, we will study the uniqueness of time-domain DOT-FDOT. The details are contained in the main theorem, which is stated in the next subsection.
\subsection{The main theorem.} 
In this work, the so-called time-domain DOT-FDOT based on the initial-boundary value problems \eqref{excitation-Ue}-\eqref{emission-um} is to solve the inverse problem:
\begin{equation}\label{IP-Total}
\text{recovering} \ \ (D,\mu_a,\mu_f) \ \ \text{from the measurements  \eqref{measure}}.
\end{equation}
The DOT-FDOT of \eqref{IP-Total} is equivalent to the two successive inverse problems given by
\begin{equation}\label{inv-new1}
\text{recovering} \ D \ \text{and} \ (\mu_a+\mu_f) \ \text{from the measurement}  \ D\frac{\partial u_e}{\partial\nu}|_{\Gamma\times(0,T)}
\end{equation}
and 
\begin{equation}\label{inv-new2}
\text{recovering} \ \mu_a \ and \ \mu_f \ \text{from the measurement}\  D\big(\frac{\partial u_e}{\partial\nu}+\frac{\partial u_m}{\partial\nu}\big)|_{\Gamma\times(0,T)}.
\end{equation}
To make our analysis more convenient, we set $c=1$ in models \eqref{excitation-Ue}-\eqref{emission-um}. Now it is time to state the uniqueness theorem. 

\begin{theorem}\label{uniqueness}
Let $\Gamma_{\rm in}$ and $\Gamma$ be nonempty open sub-boundaries of $\partial\Omega$, respectively, and fulfill \eqref{obser-gamma}. For the two sets of unknowns $\big\{D,\mu_a, \mu_f \big\}$ and $\big\{\tilde D,\tilde \mu_a, \tilde \mu_f \big\}$, by $\{u_e,u_m\}$ and $\{\tilde u_e, \tilde u_m\}$ we respectively denote the corresponding solutions of initial-boundary value problems \eqref{excitation-Ue}-\eqref{emission-um} with boundary input $g$ satisfying \eqref{g-input}. Then we conclude that:
\begin{itemize}
\item [(\romannumeral 1)] $D\frac{\partial u_e}{\partial\nu}|_{\Gamma\times(0,T)}=\tilde D\frac{\partial \tilde u_e}{\partial\nu}|_{\Gamma\times(0,T)}$ with a priori $\nabla D = \nabla \tilde D$ on $\partial\Omega$ implies
\begin{equation}\label{uniqueness-IP1}
D =\tilde D,\quad \mu_a+\mu_f=\tilde \mu_a+\tilde \mu_f;
\end{equation}
\item [(\romannumeral 2)] with result \eqref{uniqueness-IP1} and $D\frac{\partial u_e}{\partial\nu}|_{\Gamma\times(0,T)}=\tilde D\frac{\partial \tilde u_e}{\partial\nu}|_{\Gamma\times(0,T)}$, $D\frac{\partial u_m}{\partial\nu}|_{\Gamma\times(0,T)}=\tilde D\frac{\partial \tilde u_m}{\partial\nu}|_{\Gamma\times(0,T)}$, we have 
\begin{equation}\label{uniqueness-IP2}
\mu_a=\tilde \mu_a, \quad \mu_f=\tilde \mu_f.
\end{equation}
\end{itemize}
\end{theorem}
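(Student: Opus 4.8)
The plan is to exploit the fact that the two inverse problems \eqref{inv-new1}--\eqref{inv-new2} decouple: the excitation field $u_e$ depends only on $D$ and the combined potential $q:=\mu_a+\mu_f$, so part (\romannumeral1) is a self-contained problem of recovering $(D,q)$ from a single lateral Cauchy datum, while part (\romannumeral2) becomes \emph{linear} in the single unknown $\rho:=\mu_a-\tilde\mu_a$ once part (\romannumeral1) has pinned down $u_e$ and $q$. Throughout I would pass to the Laplace transform in time: writing $U_e(\cdot,p):=\int_0^\infty e^{-pt}u_e(\cdot,t)\,dt$ and using the zero initial condition in \eqref{excitation-Ue}, the transform solves the elliptic equation $-\mathrm{div}(D\nabla U_e)+(q+p)U_e=0$ with $U_e|_{\partial\Omega}=G(p)$, and likewise for $\tilde U_e$. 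Parabolic smoothing makes $t\mapsto u_e$ real-analytic on $(0,\infty)$, so the measurement on the finite window $(0,T)$ determines $U_e$ and its conormal derivative on $\Gamma$ for every $p$ in a right half-plane; since $u_e=\tilde u_e=g$ on $\partial\Omega$ and $D\partial_\nu u_e=\tilde D\partial_\nu\tilde u_e$ on $\Gamma$, this yields matching Cauchy data for $(U_e,\tilde U_e)$ on $\Gamma$ for all such $p$.

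For part (\romannumeral1) I would first settle the boundary behaviour of $D$. As $p\to\infty$ the transformed solutions develop a boundary layer of width $\sim p^{-1/2}$ whose leading profile is governed by the value of $D$ on $\partial\Omega$; matching the high-frequency asymptotics of the Cauchy data therefore forces $D=\tilde D$ on $\partial\Omega$. Combined with the a priori hypothesis $\nabla D=\nabla\tilde D$ on $\partial\Omega$, this upgrades the measured identity to $D\partial_\nu w=0$ on $\Gamma$ for the difference $w:=U_e-\tilde U_e$, so $w$ carries \emph{zero} Cauchy data on $\Gamma$. Since $w$ solves $-\mathrm{div}(D\nabla w)+(q+p)w=\mathrm{div}\big((D-\tilde D)\nabla\tilde U_e\big)-(q-\tilde q)\tilde U_e$, pairing with a solution $\Phi$ of the adjoint elliptic equation for the tilde coefficients and integrating by parts kills all boundary contributions and leaves the orthogonality identity
\begin{equation}\label{prop-ortho}
\int_\Omega\Big[(D-\tilde D)\,\nabla U_e\cdot\nabla\Phi+(q-\tilde q)\,U_e\,\Phi\Big]\,dx=0,
\end{equation}
valid for every admissible $\Phi$ and every $p$. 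The two unknown differences are then separated by their distinct scaling in $p$: the gradient term dominates at high frequency and forces $D=\tilde D$ in $\Omega$, after which \eqref{prop-ortho} reduces to $\int_\Omega(q-\tilde q)U_e\Phi\,dx=0$ and yields $q=\tilde q$, i.e. $\mu_a+\mu_f=\tilde\mu_a+\tilde\mu_f$.

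For part (\romannumeral2), part (\romannumeral1) gives $D=\tilde D$ and $q=\tilde q$, whence $u_e=\tilde u_e$ by well-posedness of \eqref{excitation-Ue}. Subtracting the already-coincident excitation flux from the second datum in \eqref{measure} shows the emission fluxes agree, $D\partial_\nu u_m=D\partial_\nu\tilde u_m$ on $\Gamma$. Setting $v:=u_m-\tilde u_m$ and $\rho:=\mu_a-\tilde\mu_a=-(\mu_f-\tilde\mu_f)$, the emission equations \eqref{emission-um} subtract to
\begin{equation}\label{prop-v}
\partial_t v-\mathrm{div}(D\nabla v)+\mu_a v=-\rho\,(u_e+\tilde u_m),
\end{equation}
with vanishing initial, Dirichlet, and (on $\Gamma$) Neumann data. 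This is a genuine single-measurement problem for the \emph{one} unknown $\rho$. Transforming again and testing against adjoint solutions produces $\int_\Omega\rho\,(U_e+\tilde U_m)\,\Phi\,dx=0$ for all $p$; choosing the input $g$ so that the known weight $u_e+\tilde u_m$ is sign-definite (the role of the admissibility condition on $g$) forces $\rho\equiv0$, giving $\mu_a=\tilde\mu_a$ and hence $\mu_f=\tilde\mu_f$.

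The hard part is the interior step of part (\romannumeral1): with only a single boundary input we do \emph{not} have the full Dirichlet-to-Neumann map, so the classical density of products of solutions is unavailable, and one must instead generate a sufficiently rich test family from the single $p$-parametrised solution---either via a Carleman estimate of Bukhgeim--Klibanov type or via the large-$p$ (WKB) asymptotics---while simultaneously disentangling the leading coefficient $D$ from the zeroth-order term $q$ in \eqref{prop-ortho}. The a priori condition $\nabla D=\nabla\tilde D$ on $\partial\Omega$ is exactly what is needed to close the boundary determination and obtain zero Cauchy data for $w$; everything downstream, including the cleaner part (\romannumeral2), rests on this step.
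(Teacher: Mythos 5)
There is a genuine gap, and it is the one you yourself flag as ``the hard part'': you treat $g$ as a generic single input and therefore end up needing to extract two coefficients $(D,\mu_a+\mu_f)$ from the orthogonality identity \eqref{prop-ortho} built from \emph{one} solution per frequency. That step cannot be closed by the tools you invoke: Bukhgeim--Klibanov/Carleman arguments need non-vanishing initial data (here $u_e(\cdot,0)=0$), and large-$p$ asymptotics of a single $p$-parametrised solution does not supply the density of products of solutions that coefficient determination requires. The paper never faces this problem, because its proof hinges on the special structure of the input \eqref{g-input}: $g=\chi\sum_k c_k\psi_k(t)\eta_k(x)$ with time-staggered cutoffs $\psi_k$ and a family $\{\eta_k\}$ whose span is dense in $H^{\frac32}(\partial\Omega)$. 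Using time-analyticity of each sub-solution $u_{e,k}$ on $(t_{2k-1},\infty)$ (Lemma \ref{l1}) and an inductive analytic-continuation argument (Lemma \ref{lem-sub}), the single measurement on $\Gamma\times(0,T)$ is shown to determine the Neumann response of \emph{every} spatial input $\eta_k$, hence after Laplace transform the full partial DtN map $\Lambda_\xi=\tilde\Lambda_\xi$ for all $\xi>0$ (Lemma \ref{lem-Lambda}). Only then does the paper determine the coefficients, not by a scaling argument but by a Borg--Levinson-type inverse spectral chain: equality of eigenvalues and of the boundary kernels $\Theta_k$ (Lemma \ref{lem-Theta}), matching of multiplicities and eigenfunction fluxes via an algebraic lemma of Canuto--Kavian (Corollary \ref{lem-orth}), and finally the inverse spectral result of \cite{Canuto04} (Lemma \ref{p1}), which is where the hypothesis $\nabla D=\nabla\tilde D$ on $\partial\Omega$ enters. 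A secondary error: your claim that parabolic smoothing makes $t\mapsto u_e$ real-analytic on $(0,\infty)$ is false here, since $g$ is time-dependent and non-analytic (the $\psi_k$ are smooth cutoffs); analyticity holds only for each $u_{e,k}$ after its cutoff stabilises, which is exactly why the paper needs the staggered construction and the induction.

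Your part (\romannumeral2) has the same density gap in a milder form. The paper substitutes $U_{e,m}:=u_e+u_m$, which satisfies a source-free diffusion equation with potential $\mu_a$ and the \emph{same} structured boundary input $g$, so the determination of $\mu_a$ reduces to the identical DtN/inverse-spectral machinery, and $\mu_f=\mu_{a,f}-\mu_a$ follows. Your alternative --- orthogonality $\int_\Omega\rho\,(U_e+\tilde U_m)\,\Phi\,dx=0$ plus sign-definiteness of the weight --- does not force $\rho\equiv0$: at each fixed frequency the admissible test functions $\Phi$ (adjoint solutions vanishing on $\partial\Omega\setminus\Gamma$) are far from dense in $L^2(\Omega)$, and non-radiating sources exist, so sign-definiteness of the weight alone proves nothing. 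Your reduction of part (\romannumeral2) to a linear problem in $\rho$ is equivalent to the paper's (indeed $v=U_{e,m}-\tilde U_{e,m}$ when $u_e=\tilde u_e$), but the conclusion again requires the richness of data that only the structured input provides.
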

\begin{remark}\label{Rem1-DOT-FDOT}
If $\mu_f=0$ in the initial boundary value problem \eqref{excitation-Ue}, the reconstructions of $\mu_a$ and $D$ from the sub-boundary measurement $D\frac{\partial u_e}{\partial\nu}|_{\Gamma\times(0,T)}$ are known as the DOT. Given $(D,\mu_a)$ and suppose $\mu_f=0$ in \eqref{excitation-Ue}, then the inverse problem of recovering $\mu_f$ from  the data $D\frac{\partial u_m}{\partial\nu}|_{\Gamma\times(0,T)}$ is the linear FDOT, which is the common problem in the community of fluorescence tomography. Hence, Theorem \ref{uniqueness} covers the uniqueness result of these two common optical imaging problems but needs the both boundary flux of excitation $u_e$ and fluorescence $u_m$. 
\end{remark}
\subsection{Contribution and outline.}
The literature on the uniqueness of time-domain DOT-FDOT is relatively rare and the difficulty to prove the uniqueness of such inverse problem is addressed in the overview \cite{Arridge99}, especially the identifiability of multiple coefficients from the boundary measurement. Some aspects of uniqueness and ill-posedness of DOT are considered \cite{Anikonov84,Anikonov85,Arridge98,Bal09,canuto2001,Choulli,dolle2018,Harrach09,He2000}, and the uniqueness results are related with the applied DOT or FDOT models. It is in general not possible to simultaneously recover the diffusion and absorption coefficients in steady-state (dc) DOT \cite{Arridge98}. Still for steady-state DOT, if restricting the unknown parameters to piecewise constant diffusions and piecewise analytic absorption coefficients, both parameters can simultaneously be determined from complete measurement data on an arbitrarily small part of the boundary \cite{Harrach09}. Furthermore, although DOT is a nonlinear inverse problem, a linearized version is often considered. The uniqueness of linearized time-domain DOT is considered \cite{He2000}. For the nonliner time-domain DOT, the uniqueness can be regained but need the Dirichlet-to-Neumann (DtN) mapping data on the sub-boundary \cite{canuto2001,dolle2018}. For linear time-domain FDOT, an identifiability result of  $\mu_f$ is established in \cite{Liu:2020}, but it requires strong prior assumptions on $\mu_f$. Recently, the uniqueness of identifying unknown $\mu_f$ from arbitrary sub-boundary data is established \cite{Sun2022} by assuming the semi-discrete form of right-hand source. However, we see that the known uniqueness results require either the linearized simple model or the impractical DtN mapping measurement. 

In this work, the uniqueness of time-domain DOT-FDOT in a more general framework is established. The inverse problem under consideration is more complicated and challenged in the following sense
\begin{itemize}
\item [(\romannumeral 1)] The initial-boundary value problems \eqref{excitation-Ue}-\eqref{emission-um} are coupled and the excitation $u_e$ depends on $\mu_f$ nonlinearly. Therefore the concerned inverse problems \eqref{inv-new1} and \eqref{inv-new2} are nonlinear. 
\item [(\romannumeral 2)] The problem of recovering the parameters $D,\mu_a$ and $\mu_f$ is a "mixed" inverse problem. The used data is one single measurement on the sub-boundary $\Gamma\times(0,T)$, not the DtN mapping data.
\end{itemize}

Theorem \ref{uniqueness} confirms that the unknowns $(D,\mu_a,\mu_f)$ can be uniquely determined by the arbitrary sub-boundary measurements $D\frac{\partial u_e}{\partial\nu}|_{\Gamma\times(0,T)}$ and $D\frac{\partial u_m}{\partial\nu}|_{\Gamma\times(0,T)}$. The observation area $\Gamma$ can be an arbitrarily open subset of boundary $\partial\Omega$. Moreover, Theorem \ref{uniqueness} covers the uniqueness results of the common DOT problem and FDOT problem as mentioned in Remark \ref{Rem1-DOT-FDOT}. 

The rest of this article is organized as follows. In Section \ref{sec_pre}, we collect several preliminary works. We prove the uniqueness theorem in Section \ref{sec_uni}. The numerical inversions will be considered in Section \ref{sec_num}. We show the validity of the proposed scheme by several numerical examples. Finally, Section \ref{concl} is devoted to conclusion and future works.

\section{Some preliminary results.}\label{sec_pre}
In this section, we first fix some general settings and notations, and show several lemmas which will be used in the forthcoming discussion.
\subsection{Settings.}
Now we give several settings. We first define the operator $A_e$ by
$$
A_eu:=  - \text{div}\left(D(x) \nabla u \right) + (\mu_a(x) + \mu_f(x))u,\quad u\in D(A):=H^2(\Omega)\cap H_0^1(\Omega).
$$
Since $D(x)>0$ and $\mu_e+\mu_f\ge0$,  the operator $A_e$ is symmetric uniformly elliptic. Let $\{\lambda_n,\varphi_n\}_{n=1}^\infty$ be the Dirichlet eigensystem of the operator $A_e$, where $0<\lambda_1\le \lambda_2\le \cdots $ and the corresponding eigenfunctions $\{\varphi_n\}_{n=1}^\infty$ forms an orthonormal basis of $L^2(\Omega)$. 
 
If considering the algebraic multiplicity, we need to make slight changes to the above symbols. Indeed, by $\{\lambda_k\}_{k\in\mathbb N}$ and $m_k\in\mathbb N$ we denote the strictly increasing sequence of the eigenvalues of $A_e$ and the algebraic multiplicity of $\lambda_k$, respectively. For each eigenvalue $\lambda_k$, we introduce a family $\{\varphi_{k,\ell}\}_{\ell=1}^{m_k}$ of eigenfunctions of $A_e$, i.e.,
\[
A_e\varphi_{k,\ell}=\lambda_k\varphi_{k,\ell},\quad\ell=1,\ldots,m_k,
\]
which forms an orthonormal basis in $L^2(\Omega)$ of the algebraic eigenspace of the operator $A_e$ associated with $\lambda_k$.  
 
Next we specify the choice of the Dirichlet boundary input $g(x,t)$, which plays an essential role in the consideration of our inverse problem. Following the settings in the paper from Kian et al. \cite{Kian20}, we choose a sequence of real and nonnegative functions  $\{\psi_k\}_{k\in\mathbb N}$ define on $\overline{\mathbb R_+}:=[0,\infty)$ such that $0\le\psi_k\le1$ and
\begin{equation}\label{psi_k}
 C^\infty(\overline{\mathbb R_+}) \ni \psi_k=\begin{cases}
0 & \mbox{on }[0,t_{2k-2}],\\
1 & \mbox{on }[t_{2k-1},\infty),
\end{cases}\end{equation}
where $\{t_k\}_{k=0}^\infty$ is a strictly increasing sequence satisfying $t_0=0$ and $\lim_{k\to\infty}t_k=T$. We fix also $\{c_k\}_{k=0}^\infty$ a sequence of $\mathbb R_+:=(0,\infty)$ such that
\begin{equation}\label{condi-psi}
\sum_{k=1}^\infty c_k\|\psi_k\|_{W^{2,\infty}(\mathbb R_+)}<\infty.
\end{equation}
For example, one can choose $c_k:=e^{-k^2}$. We select a sequence of functions $\{\eta_k\}_{k\in\mathbb N}$ of $H^{\frac32}(\partial\Omega)$ such that $\|\eta_k\|_{H^{\frac32}(\partial\Omega)} \le 1$ and span$\{\eta_k\}$ is dense in $H^{\frac32}(\partial\Omega)$. Finally, we can construct the Dirichlet boundary input $g$ of the system \eqref{excitation-Ue}
\begin{equation}\label{g-input}
g(x,t):=\chi(x)\sum_{k=1}^\infty c_k\,\psi_k(t)\eta_k(x),\quad (x,t)\in \partial\Omega\times \mathbb R_+,
\end{equation}
where $0\le\chi\le1$ is such that 
\begin{equation}\label{def-chi}
C_0^\infty(\partial\Omega)\ni\chi=\begin{cases}
1 &\mbox{ on } \Gamma_{\rm in}',\\
0 &\mbox{ on } \partial\Omega\setminus\Gamma_{\rm in},
\end{cases}
\end{equation}
where $\Gamma_{\rm in}'\subset \Gamma_{\rm in}\subset\partial\Omega$. Based on the above settings of boundary input $g$, it is not difficult to check that the function $g$ belongs to the function space $C^2(\overline{\mathbb R_+};H^{\frac32}(\partial\Omega))$. 
\subsection{Time-analyticity of solutions.}\label{sec-analytic}
Now based on the characteristics of boundary condition $g$ in \eqref{g-input}, we construct the following subproblems corresponding to the system \eqref{excitation-Ue} by treating each term of the series as a new boundary input, that is,
\begin{equation}\label{Ue-sub}
\left\{
\begin{alignedat}{2}
\partial_t u_{e,k} + A_e u_{e,k}&=0 &\quad  &\mbox{in } \Omega\times \mathbb R_+,\\
u_{e,k}&=0 &\quad  &\mbox{in }\Omega\times\{0\},\\
u_{e,k}& = \chi c_k\,\psi_k\,\eta_k &\quad  &\mbox{on }\partial\Omega \times \mathbb R_+.
\end{alignedat}
\right.
\end{equation}
We investigate the analyticity of the solutions $u_{e,k}$ to \eqref{Ue-sub} in time for each $k\in\mathbb N$. To this end, by $\mathcal H(r,\infty;X)$ we denote the set of holomophic functions on $(r,+\infty)$ taking values in the Banach space $X$. Then we state the following intermediate result.
\begin{lemma}\label{l1}
For any integer $k\in\mathbb N$, the solution $u_{e,k}$ to the sub-problem \eqref{Ue-sub} belongs to $L^\infty(\mathbb R_+;H^2(\Omega))$ and is such that $u_{e,k}|_{(t_{2k-1},\infty)}\in\mathcal H(t_{2k-1},\infty;H^2(\Omega))$.
\end{lemma}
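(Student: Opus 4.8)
The plan is to exploit the one structural feature of the data in \eqref{Ue-sub} that drives everything: since $\psi_k\equiv 1$ on $[t_{2k-1},\infty)$, the Dirichlet input $\chi c_k\psi_k\eta_k$ is \emph{time-independent} there. I would therefore split the argument into a regularity part valid on all of $\mathbb R_+$ and an analyticity part confined to $(t_{2k-1},\infty)$. For the first part I would lift the boundary datum and appeal to parabolic maximal regularity: since $-A_e$ generates an analytic semigroup on $L^2(\Omega)$ and the input $\chi c_k\psi_k\eta_k$ lies in $C^2(\overline{\mathbb R_+};H^{\frac32}(\partial\Omega))$, the solution $u_{e,k}$ exists and is bounded in $H^2(\Omega)$ uniformly in time. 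On $[0,t_{2k-2}]$ one in fact has $u_{e,k}\equiv 0$ by uniqueness (zero initial and boundary data), and on the compact transition interval $[t_{2k-2},t_{2k-1}]$ the $H^2$ bound is controlled by the smooth-in-time, $H^{\frac32}$-in-space data. This yields $u_{e,k}\in L^\infty(\mathbb R_+;H^2(\Omega))$.

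For the analyticity on $(t_{2k-1},\infty)$ I would homogenize the now time-independent boundary condition. Because $\lambda_1>0$, the map $A_e:H^2(\Omega)\cap H_0^1(\Omega)\to L^2(\Omega)$ is an isomorphism, so the elliptic Dirichlet problem $A_e w_k=0$ in $\Omega$, $w_k=\chi c_k\eta_k$ on $\partial\Omega$ has a unique stationary solution $w_k\in H^2(\Omega)$ (elliptic regularity, using $\chi c_k\eta_k\in H^{\frac32}(\partial\Omega)$). Setting $v_k:=u_{e,k}-w_k$ on $(t_{2k-1},\infty)$, the time-independence of $w_k$ together with $A_ew_k=0$ gives $\partial_t v_k+A_e v_k=0$ with homogeneous Dirichlet boundary values (since $\psi_k=1$ there) and initial value $v_k(\cdot,t_{2k-1})=u_{e,k}(\cdot,t_{2k-1})-w_k$, which lies in $H^2(\Omega)\cap H_0^1(\Omega)=D(A_e)$ as both terms share the trace $\chi c_k\eta_k$.

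The final step is the eigenfunction expansion. With $a_n:=\langle v_k(\cdot,t_{2k-1}),\varphi_n\rangle$ I would write
\[
v_k(\cdot,t)=\sum_{n=1}^\infty a_n\,e^{-\lambda_n(t-t_{2k-1})}\varphi_n,
\]
and then replace $t$ by a complex $z$ with $\operatorname{Re}z>t_{2k-1}$. Each summand extends holomorphically, and using the elliptic bound $\|\varphi_n\|_{H^2(\Omega)}\le C\lambda_n\|\varphi_n\|_{L^2(\Omega)}$ together with $\sum_n\lambda_n^2|a_n|^2=\|A_e v_k(\cdot,t_{2k-1})\|_{L^2(\Omega)}^2<\infty$, one checks that on every compact subset of $\{\operatorname{Re}z>t_{2k-1}\}$ the series converges absolutely and uniformly in $H^2(\Omega)$; the exponential factor $e^{-\lambda_n(\operatorname{Re}z-t_{2k-1})}$ dominates the polynomial growth $\lambda_n^2$ coming from the $H^2$ norm (a Cauchy--Schwarz split against $\sum_n e^{-2\lambda_n\delta}<\infty$, finite by Weyl asymptotics, closes the estimate). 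A uniform-limit-of-holomorphic argument then shows $z\mapsto v_k(\cdot,z)$ is $H^2(\Omega)$-valued holomorphic, and since $w_k$ is constant in time, $u_{e,k}=v_k+w_k\in\mathcal H(t_{2k-1},\infty;H^2(\Omega))$.

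I expect the main obstacle to be the interface between the two parts: justifying that $u_{e,k}(\cdot,t_{2k-1})\in H^2(\Omega)$ with the correct trace, so that the initial datum for $v_k$ genuinely lies in $D(A_e)$ and the $H^2$-series stays summable up to $t=t_{2k-1}$. This requires propagating parabolic regularity across the transition phase $[t_{2k-2},t_{2k-1}]$ rather than mere $L^2$ theory. The smoothing of the analytic semigroup makes analyticity on the \emph{open} interval robust even for rough data, but the uniform $H^2$ bound needed for the $L^\infty(\mathbb R_+;H^2(\Omega))$ conclusion near $t_{2k-1}$ hinges precisely on this regularity being available at the matching time.
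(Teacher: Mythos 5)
Your analyticity argument on $(t_{2k-1},\infty)$ is correct and is essentially the paper's own proof in disguise: the paper lifts the boundary datum by the $A_e$-harmonic extension $G_k$ (your stationary solution $w_k$ is exactly $c_kG_k$), decomposes $u_{e,k}=c_k\psi_k(t)G_k+w_{e,k}$, and, using $\psi_k'\equiv 0$ beyond $t_{2k-1}$, reduces the Duhamel representation of $w_{e,k}$ on $(t_{2k-1},\infty)$ to a series $\sum_\ell b_\ell e^{-\lambda_\ell t}\varphi_\ell$ — the same expansion you obtain by restarting the semigroup at $t=t_{2k-1}$, with the same Weierstrass-type conclusion. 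As you correctly observe, parabolic smoothing makes this half insensitive to whether the slice at $t_{2k-1}$ lies in $D(A_e)$.

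The genuine gap is in the other half, $u_{e,k}\in L^\infty(\mathbb R_+;H^2(\Omega))$. You assert it by ``parabolic maximal regularity'' applied to data in $C^2(\overline{\mathbb R_+};H^{\frac32}(\partial\Omega))$, and then concede in your final paragraph that propagating the $H^2$ bound across the transition interval $[t_{2k-2},t_{2k-1}]$ is ``the main obstacle''. But that obstacle \emph{is} the substance of this half of the lemma, so as written the statement is not proved; moreover, maximal regularity in its standard form gives $L^p$-in-time bounds with values in $D(A_e)$, not $L^\infty$-in-time, so even the cited tool does not directly yield the claim (one would need, e.g., H\"older-in-time continuity of the lifted source plus classical-solution theory for analytic semigroups, an argument you never carry out). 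The paper closes this step by an explicit computation from the very decomposition you already set up: $w_{e,k}$ solves the homogeneous Dirichlet problem with source $-c_k\psi_k'(t)G_k$, whence
\begin{align*}
\|w_{e,k}(\cdot,t)\|_{H^2(\Omega)}^2
&\le C\|A_ew_{e,k}(\cdot,t)\|_{L^2(\Omega)}^2
\le C|c_k|^2\|\psi_k\|_{W^{1,\infty}(\mathbb R_+)}^2\sum_{\ell=1}^\infty\Big(\lambda_\ell\int_0^te^{-\lambda_\ell(t-s)}\,ds\Big)^2\big|\langle G_k,\varphi_\ell\rangle_{L^2(\Omega)}\big|^2\\
&= C|c_k|^2\|\psi_k\|_{W^{1,\infty}(\mathbb R_+)}^2\sum_{\ell=1}^\infty\big(1-e^{-\lambda_\ell t}\big)^2\big|\langle G_k,\varphi_\ell\rangle_{L^2(\Omega)}\big|^2
\le C\|G_k\|_{L^2(\Omega)}^2\le C,
\end{align*}
the key mechanism being that the factor $\lambda_\ell$ produced by the $H^2$ norm is exactly absorbed by the time integral of the semigroup kernel, $\lambda_\ell\int_0^te^{-\lambda_\ell(t-s)}\,ds=1-e^{-\lambda_\ell t}\le 1$, while $|c_k|\,\|\psi_k\|_{W^{1,\infty}(\mathbb R_+)}$ stays bounded by \eqref{condi-psi}. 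This bound is uniform in $t\in\mathbb R_+$ — in particular across $[t_{2k-2},t_{2k-1}]$ — and it simultaneously shows $u_{e,k}(\cdot,t_{2k-1})\in H^2(\Omega)$ with the correct trace, i.e., it resolves the interface issue you flagged as unresolved.
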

\begin{proof}
Fix $G_k$ solving the boundary value problem
$$
\left\{ \begin{array}{rcll} 
A_e G_k & = & 0 & \mbox{in } \Omega ,\\
  G_k & = & \chi\eta_k & \mbox{on}\ \partial\Omega.
\end{array}
\right.
$$
It is not difficult to check from the theory of elliptic equations that $G_k\in H^2(\Omega)$ and $G_k$ is such that the following regularity estimate
\begin{equation}\label{esti-G}
\|G_k\|_{H^2(\Omega)} \le C\|\eta_k\|_{H^{\frac32}(\partial\Omega)} \le C.
\end{equation}
Here and henceforth, by $C>0$ we denote a general constant which is independent of $k$ and  may change by lines. Then, we can decompose $u_{e,k}$ into $u_{e,k} = w_{e,k} + v_{e,k}$ with $v_{e,k}(t,\cdot) = c_k\psi_k(t)G_k$ and $w_{e,k}$ solving
\begin{equation*}
\left\{ 
\begin{alignedat}{2} 
(\partial_t + A_e) w_{e,k}  & =  -c_k \psi_k'(t)G_k &\quad & \mbox{in } \Omega\times\mathbb R_+,\\
w_{e,k}(x,t)  & =  0 &\quad & \mbox{on } \partial\Omega\times \mathbb R_+, \\  
w_{e,k}(\cdot,0) & = 0 &\quad & \mbox{in } \Omega.
\end{alignedat}
\right.
\end{equation*}
It is clear that $v_{e,k}\in \mathcal H(t_{2k-1},\infty;H^2(\Omega))$ by the notation of \eqref{psi_k}. Therefore, we only need to prove that $w_{e,k}\in\mathcal H(t_{2k-1},\infty;H^2(\Omega))$. For this, by noticing the argument of eigenfunction expansion, we can arrive at the representation formula of the solution $w_{e,k}$ as follows.
\begin{equation}\label{sol-w}
w_{e,k}(\cdot,t) 
= -c_k\sum_{\ell=1}^\infty \int_0^t \exp(-\lambda_\ell (t-s))\psi_k'(s)ds\left\langle G_k,\varphi_\ell\right\rangle_{L^2(\Omega)} \varphi_\ell.
\end{equation}
Using the fact that $\psi_k'=0$ on $(t_{2k-1},+\infty)$, it follows that
$$
w_{e,k}(\cdot,t) = -c_k\sum_{\ell=1}^\infty e^{-\lambda_\ell t} \int_0^{t_{2k-1}}e^{\lambda_\ell s}\psi_k'(s)ds\left\langle G_k,\varphi_\ell\right\rangle_{L^2(\Omega)} \varphi_\ell,\quad t\geq t_{2k-1},
$$
from which by further noting the analyticity of the exponential function we conclude that $w_{e,k}|_{(t_{2k-1},\infty)}\in\mathcal H(t_{2k-1},+\infty;H^2(\Omega))$. Therefore, $u_{e,k}$ restricted to $(t_{2k-1},\infty)$ belongs to $\mathcal H(t_{2k-1},\infty;H^2(\Omega))$. 

We next show that the solution $u_{e,k}\in L^\infty(\mathbb R_+;H^2(\Omega))$. It is not difficult to check that $v_{e,k} \in L^\infty(\mathbb R_+;H^2(\Omega))$, so it remains to deal with $w_{e,k}$. For this, on the basis of the representation formula \eqref{sol-w} and noting $\|u\|_{H^2(\Omega)} \le C\|A_e u\|_{L^2(\Omega)}$, by a directly calculation we arrive at the inequalities
\begin{align*}
&\|w_{e,k}(\cdot,t)\|_{H^2(\Omega)}^2 
 \le C\|A_e w_{e,k}(\cdot,t)\|_{L^2(\Omega)}^2\\
& \le C|c_k|^2\sum_{\ell=1}^\infty \lambda_\ell^2 \left|\int_0^t \exp\left\{-\lambda_\ell(t-s)\right\} \psi_k'(s) d s\right|^2\left|\langle G_k,\varphi_\ell\rangle_{L^2(\Omega)}\right|^2\\
& \le C|c_k|^2 \|\psi_k\|_{W^{1,\infty}(\mathbb R_+)}^2 \sum_{\ell=1}^\infty \lambda_\ell^2 \left|\int_0^t e^{-\lambda_\ell s} d s\right|^2\left|\langle G_k,\varphi_\ell\rangle_{L^2(\Omega)}\right|^2\\
& = C|c_k|^2 \|\psi_k\|_{W^{1,\infty}(\mathbb R_+)}^2 \sum_{\ell=1}^\infty  \left| 1-e^{-\lambda_\ell t} \right|^2\left|\langle G_k,\varphi_\ell\rangle_{L^2(\Omega)}\right|^2
 \le C \|G_k\|_{L^2(\Omega)}^2 \le C.
\end{align*}
Here the last inequality is due to the assumption \eqref{condi-psi} and the regularity estimate \eqref{esti-G}. Therefore we have the function $
t\longmapsto w_{e,k}(\cdot,t)$ is in $L^\infty(\mathbb R_+;H^2(\Omega))$. 
Collecting all the above estimates, we can finish the proof of the lemma.
\end{proof}
\section{The proof of the uniqueness theorem.}\label{sec_uni} 
As above mentioned, the unique solvability of our DOT-FDOT problem \eqref{IP-Total} is equivalent to the uniqueness of the two sub inverse problems: \eqref{inv-new1} and \eqref{inv-new2}. Consequently, we first treat the sub-problem \eqref{inv-new1}, that is, the determination of the diffusion coefficient $D$ and mixture absorption $\mu_a+\mu_f$ in \eqref{excitation-Ue}. Then we proceed to the uniqueness of the reconstruction of background absorption $\mu_a$ and fluorescence absorption $\mu_f$ (i.e, the sub-problem \eqref{inv-new2}) by employing the similar argument used in solving \eqref{inv-new1}. 
\subsection{Recovering the diffusion coefficient and mixture absorption.}\label{subsec-uniqu1}
In this part, we consider the determination of the diffusion coefficient $D$ and the mixture absorption $\mu_a + \mu_f$ from the boundary measurement 
\begin{equation}
D(x)\frac{\partial u_e}{\partial\nu}(x,t),\quad (x,t)\in \Gamma\times(0,T).
\end{equation}
Firstly, we transfer the observation information
\begin{equation}\label{obser}
D\frac{\partial u_e}{\partial\nu}|_{\Gamma\times(0,T)} = \tilde D\frac{\partial \tilde u_e}{\partial\nu}|_{\Gamma\times(0,T)}
\end{equation}
of original problem \eqref{excitation-Ue} to a sequential identities of Neumann boundary of the solutions to the corresponding subproblems \eqref{Ue-sub}, that is, we have the following useful lemma.
\begin{lemma}\label{lem-sub}
The assumption \eqref{obser} implies that 
\begin{equation}\label{obser-sub}
D\frac{\partial u_{e,k}}{\partial \nu} = \tilde D \frac{\partial \tilde u_{e,k}}{\partial \nu}\quad\mbox{on }\Gamma\times(0,\infty),\ k\in\mathbb N,
\end{equation}
where $u_{e,k}$ and $\tilde u_{e,k}$ are solutions of \eqref{Ue-sub} with respect to $(D,\mu_a + \mu_f)$ and $(\tilde D, \tilde \mu_a + \tilde \mu_f)$. 
\end{lemma}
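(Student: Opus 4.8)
The plan is to exploit two structural features already built into the construction: the staggered supports of the cut-offs $\psi_k$ prescribed in \eqref{psi_k}, and the time-analyticity of each subproblem solution furnished by Lemma \ref{l1}. The staggering will let me read off the individual weighted Neumann traces on a finite time window directly from the single measurement \eqref{obser}, and the analyticity will then upgrade that finite window to the whole half-line $(0,\infty)$, which is what \eqref{obser-sub} asserts.

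First I would record a causality and superposition observation for the forward problem. For a parabolic equation with zero initial data, uniqueness implies that the restriction of the solution to $\Omega\times(0,\tau)$ is determined by the boundary input on $\partial\Omega\times(0,\tau)$ alone. Since $\psi_j\equiv 0$ on $[0,t_{2j-2}]$, on the window $(0,t_{2k})$ every index $j\ge k+1$ contributes nothing, so the input $g$ in \eqref{g-input} collapses there to the \emph{finite} sum $\chi\sum_{j=1}^{k}c_j\psi_j\eta_j$. By linearity this forces $u_e=\sum_{j=1}^{k}u_{e,j}$ on $\Omega\times(0,t_{2k})$, with no convergence issue since the sum is finite on each such window. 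Taking the weighted normal derivative and using \eqref{obser}, which is valid on $(0,T)\supset(0,t_{2k})$ because $t_{2k}<T$, I obtain on $\Gamma\times(0,t_{2k})$ the identity $\sum_{j=1}^{k}D\frac{\partial u_{e,j}}{\partial\nu}=\sum_{j=1}^{k}\tilde D\frac{\partial\tilde u_{e,j}}{\partial\nu}$.

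Next I would peel off the terms by induction on $k$. For $k=1$ the window $(0,t_2)$ already gives $D\frac{\partial u_{e,1}}{\partial\nu}=\tilde D\frac{\partial\tilde u_{e,1}}{\partial\nu}$ on $\Gamma\times(0,t_2)$. Assuming \eqref{obser-sub} holds on $\Gamma\times(0,\infty)$ for all indices $1,\dots,k-1$, the first $k-1$ summands cancel in the displayed identity, leaving $D\frac{\partial u_{e,k}}{\partial\nu}=\tilde D\frac{\partial\tilde u_{e,k}}{\partial\nu}$ on $\Gamma\times(0,t_{2k})$, the finite-window version of the claim. To reach $(0,\infty)$ I invoke Lemma \ref{l1}: $u_{e,k}$ and $\tilde u_{e,k}$ restricted to $(t_{2k-1},\infty)$ are holomorphic with values in $H^2(\Omega)$, and composing with the bounded trace operator $D\frac{\partial}{\partial\nu}\colon H^2(\Omega)\to H^{\frac12}(\Gamma)$ preserves holomorphy, so both weighted Neumann traces are $H^{\frac12}(\Gamma)$-valued holomorphic functions on $(t_{2k-1},\infty)$. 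They agree on the nonempty open subinterval $(t_{2k-1},t_{2k})\subset(0,t_{2k})$, hence by the identity theorem for Banach-space-valued analytic functions they coincide on all of $(t_{2k-1},\infty)$. Since $(0,t_{2k})\cup(t_{2k-1},\infty)=(0,\infty)$, combining this with the equality already known on $(0,t_{2k})$ yields \eqref{obser-sub} for index $k$ and closes the induction.

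I expect the main obstacle to be the interval bookkeeping that makes the analytic continuation go through, namely ensuring that the analyticity domain $(t_{2k-1},\infty)$ genuinely overlaps the measurement-derived window $(0,t_{2k})$; this rests on the strict ordering $t_{2k-1}<t_{2k}$ encoded in \eqref{psi_k}, and on transporting the time-analyticity of Lemma \ref{l1} through the trace map to the boundary flux. The causality step, though intuitively clear, also deserves a careful appeal to forward uniqueness rather than to a merely formal term-by-term summation of the full series in \eqref{g-input}.
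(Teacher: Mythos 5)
Your proposal is correct and follows essentially the same route as the paper's own proof: exploiting the staggered supports of the $\psi_k$ to reduce $g$ to a finite sum on each window $(0,t_{2k})$, identifying $u_e$ with the corresponding finite sum of subproblem solutions by well-posedness, peeling terms off by induction, and then upgrading the finite-window identity to $(0,\infty)$ via the time-analyticity of Lemma \ref{l1} together with the trace theorem and unique continuation of analytic functions. The only cosmetic difference is that you land the Neumann traces in $H^{\frac12}(\Gamma)$ while the paper uses $L^2(\Gamma)$, which changes nothing in the argument.
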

\begin{proof}
We will prove the conclusion \eqref{obser-sub} by applying inductive argument. Firstly, for $k=1$, from the choice of the sequence $\{\psi_k\}_{k\in\mathbb N}$ in \eqref{psi_k}, it is not difficult to see that
$$
\psi_k=0\quad\mbox{in }(0,t_2),\quad \forall\,k\ge2.
$$
Therefore, in the case of $t\in (0,t_2)$, the boundary condition $g$ can be deduce into 
$$
g(x,t)=c_1\psi_1(t)\chi(x)\eta_1(x),\quad  (x,t)\in \partial\Omega\times(0,t_2).$$
Now we conclude from the wellposeness of the problem \eqref{excitation-Ue} that $u=u_{e,1}$ and $\tilde u = \tilde u_{e,1}$ in $\Omega\times(0,t_2)$, which further combined with the condition \eqref{obser} implies
\begin{equation}\label{t1f}
D \frac{\partial u_{e,1}}{\partial \nu} = \tilde D\frac{\partial \tilde u_{e,1}}{\partial \nu} \quad\mbox{on } \Gamma\times(0,t_2).
\end{equation}
On the other hand, from Lemma \ref{l1} we know that $u_{e,1},\tilde u_{e,1}\in \mathcal H(t_1,\infty;H^2(\Omega))$. Thus, the trace theorem and properties of the analytic functions yield that the following two functions 
$$
t\longmapsto D\frac{\partial u_{e,1}}{\partial \nu}(\cdot,t)|_{\Gamma} \mbox{ and } 
t\longmapsto\tilde D \frac{\partial \tilde u_{e,1}}{\partial \nu}(\cdot,t)|_{\Gamma},\quad t>t_1
$$
belong to $\mathcal H(t_1,\infty; L^2(\Gamma))$, and the condition \eqref{t1f} implies \eqref{obser-sub} for $k=1$. We finish the proof of \eqref{obser-sub} with $k=1$. 
Next assuming that \eqref{obser-sub} is fulfilled for all $k=1,2,\ldots,\ell$ with some $\ell\in\mathbb N$, we will show that the conclusion \eqref{obser-sub} is valid also for $k=\ell+1$. For this, again noting the definition of $\psi_k$ in \eqref{psi_k}, it can be easily seen that
$$
\psi_k=0\quad\mbox{in }(0,t_{2\ell+2}),\quad \forall\,k\ge\ell+2.
$$
Therefore, for $t\in(0,t_{2\ell+2})$, the Dirichlet boundary input $g$ can be rewritten as follows 
$$
g(x,t) = \chi(x)\sum_{k=1}^{\ell+1} c_k\psi_k(t)\eta_k(x),\quad  (x,t)\in \partial\Omega\times(0,t_{2\ell+2}).
$$
By an argument similar to the case of $k=1$, we can obtain
$$
\sum_{k=1}^{\ell+1} u_{e,k} = u_e,\quad \sum_{k=1}^{\ell+1} \tilde u_{e,k} = \tilde u_e\quad\mbox{in }\Omega\times(0,t_{2\ell+2}).
$$
Therefore, \eqref{obser} implies
$$
D\sum_{k=1}^{\ell+1}  \frac{\partial u_{e,k}}{\partial \nu} = \tilde D\sum_{k=1}^{\ell+1}  \frac{\partial \tilde u_{e,k}}{\partial\nu} \quad\mbox{on }\Gamma\times(0,t_{2\ell+2}).
$$
Now by the inductive assumption, that is, \eqref{obser-sub} holds for $k=1,\cdots,\ell$, we further obtain that
\[
D \frac{\partial u_{e,\ell+1}}{\partial \nu} = \tilde D \frac{\partial \tilde u_{e,\ell+1}}{\partial \nu} \quad \mbox{on }\Gamma \times (0,t_{2\ell+2}).
\]
Again by the use of the $t$-analyticity result in Lemma \ref{l1}, we conclude that the following two functions are such that
$$
t\longmapsto D\frac{\partial u_{e,\ell+1}}{\partial \nu}(\cdot,t)|_{\Gamma} \mbox{ and }
t\longmapsto \tilde D \frac{\partial \tilde u_{e,\ell+1}}{\partial \nu}(\cdot,t)|_{\Gamma} \in \mathcal H(t_{2\ell+1},\infty;L^2(\Gamma)),
$$
from which we further use the continuation principle of analytic functions to imply \eqref{obser-sub} holds true for $k=\ell+1$. This proves that \eqref{obser-sub} is valid for all $k\in\mathbb N$. We complete the proof of the lemma.
\end{proof}
\begin{remark}
Here and henceforth, some parts of the proofs may be similar to the argument used in Kian et al. \cite{Kian20}. But we retain this proof for the completeness and consistency of the article because our paper deals with a totally different model.
\end{remark}
From the results in Lemma \ref{l1} and Sobolev trace theorem, we can directly deduce that
\begin{corollary}
For any $k\in \mathbb N$ and any $\xi>0$, we have
$$
t\longmapsto e^{-\xi t}u_{e,k}(\cdot,t)\in L^1(\mathbb R_+;H^2(\Omega))
$$
and
$$
t\longmapsto e^{-\xi t} \frac{\partial u_{e,k}}{\partial\nu}(\cdot,t)\in L^1(\mathbb R_+;L^2(\Gamma)).
$$
\end{corollary}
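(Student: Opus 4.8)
The plan is to obtain both integrability statements as immediate consequences of the uniform-in-time bound $u_{e,k}\in L^\infty(\mathbb R_+;H^2(\Omega))$ furnished by Lemma \ref{l1}, together with the elementary fact that $t\mapsto e^{-\xi t}$ is integrable on $\mathbb R_+$ for every $\xi>0$. First I would record that Lemma \ref{l1} supplies a constant $M_k>0$, depending on $k$ but independent of $t$, such that $\|u_{e,k}(\cdot,t)\|_{H^2(\Omega)}\le M_k$ for a.e.\ $t>0$; measurability of the map $t\mapsto u_{e,k}(\cdot,t)$ into $H^2(\Omega)$ is also inherited from that lemma, so only finiteness of the $L^1$-norm remains to be checked.

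For the first assertion the estimate is direct: weighting by $e^{-\xi t}$ and integrating gives $\int_0^\infty \|e^{-\xi t}u_{e,k}(\cdot,t)\|_{H^2(\Omega)}\,dt\le M_k\int_0^\infty e^{-\xi t}\,dt=M_k/\xi<\infty$, which is exactly $t\mapsto e^{-\xi t}u_{e,k}(\cdot,t)\in L^1(\mathbb R_+;H^2(\Omega))$. For the second assertion the extra ingredient is the Sobolev trace theorem: since $u_{e,k}(\cdot,t)\in H^2(\Omega)$, its normal derivative $\frac{\partial u_{e,k}}{\partial\nu}(\cdot,t)$ is the trace of $\nabla u_{e,k}\in H^1(\Omega)^d$ against the (smooth) unit normal, hence lies in $H^{1/2}(\partial\Omega)\hookrightarrow L^2(\partial\Omega)$, and its restriction to $\Gamma$ satisfies $\big\|\frac{\partial u_{e,k}}{\partial\nu}(\cdot,t)\big\|_{L^2(\Gamma)}\le C\|u_{e,k}(\cdot,t)\|_{H^2(\Omega)}\le CM_k$ uniformly in $t$. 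Multiplying by $e^{-\xi t}$ and integrating as before yields the finite bound $CM_k/\xi$, establishing $t\mapsto e^{-\xi t}\frac{\partial u_{e,k}}{\partial\nu}(\cdot,t)\in L^1(\mathbb R_+;L^2(\Gamma))$.

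The argument is essentially elementary, so I do not anticipate a genuine obstacle. The only point requiring a little care is the transfer of the time-uniform $H^2$ control to a time-uniform $L^2(\Gamma)$ control of the boundary flux, for which one must invoke the continuity of the trace of the normal derivative from $H^2(\Omega)$ into $L^2(\partial\Omega)$ with a constant independent of $t$; once this bound is in hand, the exponential weight $e^{-\xi t}$ alone secures integrability over the unbounded interval $(0,\infty)$ for every $\xi>0$, and both conclusions follow at once.
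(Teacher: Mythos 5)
Your argument is correct and coincides with the paper's own justification, which deduces the corollary in one line ``from the results in Lemma \ref{l1} and Sobolev trace theorem'': the uniform-in-time $H^2(\Omega)$ bound, the trace estimate for the normal derivative into $L^2(\Gamma)$, and the integrability of $e^{-\xi t}$ on $\mathbb R_+$. You have simply written out the details that the paper leaves implicit.
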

The above statement ensures the existence of the Laplace transforms of the two functions $u_{e,k}$ and $\tilde u_{e,k}$, which allows us to use Laplace transform to consider our inverse problem in the frequency domain. Firstly, for any $\xi>0$ we introduce the following boundary value problem for elliptic equation
\begin{equation}\label{t1h}
\left\{
\begin{alignedat}{2}
(A_e + \xi) U_\xi(x)&=0 &\quad & \mbox{in }\Omega,\\
U_\xi(x)&= \chi(x)h(x)  &\quad & \mbox{on }\partial\Omega,
\end{alignedat}
\right.
\end{equation}
where $h\in H^{\frac32}(\partial\Omega)$ and the cut-off function $\chi$ is defined in \eqref{def-chi}. We first assert that the function $U_\xi$ is analytic with respect to the parameter $\xi>0$ and can be analytically extended to the set $\mathbb C\setminus \{-\lambda_k\}_{k=1}^\infty$. We have the following lemma.
\begin{lemma}\label{lem-U-analy}
The function $\mathbb R_+ \ni\xi \mapsto U_\xi\in H^2(\Omega)$ is analytic with respect to $\xi>0$ and can be analytically extended to $\xi\in \mathbb C\setminus \{-\lambda_k\}_{k=1}^\infty$.
\end{lemma}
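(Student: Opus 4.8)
The plan is to strip the $\xi$-dependence out of the boundary condition by a fixed lifting, so that all of it is carried by a resolvent acting on a $\xi$-independent source, and then to expand that resolvent in the Dirichlet eigenbasis $\{\varphi_\ell\}$ of $A_e$. Concretely, I would fix $G\in H^2(\Omega)$ solving $A_e G=0$ in $\Omega$ with $G=\chi h$ on $\partial\Omega$ --- the $\xi$-independent lifting, whose existence and elliptic estimate follow exactly as for $G_k$ in the proof of Lemma \ref{l1} --- and write $U_\xi=G+W_\xi$. Since $A_e G=0$ as a differential expression, the corrector solves the homogeneous-Dirichlet problem $(A_e+\xi)W_\xi=-\xi G$ in $\Omega$, $W_\xi=0$ on $\partial\Omega$, so $W_\xi\in H^2(\Omega)\cap H_0^1(\Omega)=\mathcal D(A_e)$ and the entire $\xi$-dependence is now internal.

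Setting $g_\ell:=\langle G,\varphi_\ell\rangle_{L^2(\Omega)}$ and testing $(A_e+\xi)W_\xi=-\xi G$ against $\varphi_\ell$ gives $(\lambda_\ell+\xi)\langle W_\xi,\varphi_\ell\rangle_{L^2(\Omega)}=-\xi g_\ell$, hence the explicit series
\[
W_\xi=-\sum_{\ell=1}^\infty \frac{\xi}{\lambda_\ell+\xi}\,g_\ell\,\varphi_\ell,
\]
in which every coefficient $\xi\mapsto -\xi(\lambda_\ell+\xi)^{-1}$ is rational and holomorphic on $\mathbb{C}\setminus\{-\lambda_\ell\}$. Each partial sum is therefore an $H^2(\Omega)$-valued holomorphic function of $\xi$ on $\mathbb{C}\setminus\{-\lambda_k\}_{k=1}^\infty$, and $G$ is constant in $\xi$; so it remains only to prove that the series converges locally uniformly in the $H^2(\Omega)$-norm off the poles, after which the vector-valued Weierstrass theorem yields analyticity of $W_\xi$, hence of $U_\xi=G+W_\xi$.

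For the convergence I would reuse the elliptic a priori bound $\|u\|_{H^2(\Omega)}\le C\|A_e u\|_{L^2(\Omega)}$ already invoked in Lemma \ref{l1}, which reduces control of the tail to
\[
\Big\|\sum_{\ell>N}\frac{\xi}{\lambda_\ell+\xi}\,g_\ell\,\varphi_\ell\Big\|_{H^2(\Omega)}^2\le C\sum_{\ell>N}\lambda_\ell^2\,\Big|\frac{\xi}{\lambda_\ell+\xi}\Big|^2|g_\ell|^2=C|\xi|^2\sum_{\ell>N}\Big|\frac{\lambda_\ell}{\lambda_\ell+\xi}\Big|^2|g_\ell|^2.
\]
The crucial point is that on any compact $K\subset\mathbb{C}\setminus\{-\lambda_k\}_{k=1}^\infty$ the factor $\lambda_\ell/(\lambda_\ell+\xi)$ is bounded uniformly in $\ell$ and in $\xi\in K$: for the finitely many indices with $\lambda_\ell\le 2\max_K|\xi|$ this follows from continuity of a pole-free function on the compact $K$, while for the remaining indices $|\lambda_\ell+\xi|\ge\lambda_\ell/2$. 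Thus the tail is dominated by $C(K)\sum_{\ell>N}|g_\ell|^2$, which tends to $0$ uniformly on $K$ because $\sum_\ell|g_\ell|^2=\|G\|_{L^2(\Omega)}^2<\infty$.

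I expect the main obstacle to be precisely this uniform control of $\lambda_\ell/(\lambda_\ell+\xi)$ near the spectrum: one must separate the finitely many resonant eigenvalues, where uniformity comes from compactness of $K$ and its positive distance to $\{-\lambda_k\}$, from the high-frequency tail, where it comes from $\lambda_\ell\to\infty$. Everything else --- the lifting, the eigenfunction expansion, and the reduction to an $\ell^2$-summable coefficient sequence --- is routine. An equivalent, slightly shorter route would bypass the explicit series by invoking the operator-valued resolvent identity to show directly that $\xi\mapsto(A_e+\xi)^{-1}\in\mathcal{B}(L^2(\Omega),H^2(\Omega))$ is holomorphic off the spectrum and then writing $U_\xi=G-\xi(A_e+\xi)^{-1}G$; I would mention this as the conceptual reason for the result but carry out the eigenfunction version for consistency with Lemma \ref{l1}.
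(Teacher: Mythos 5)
Your proposal is correct and follows essentially the same route as the paper: split off a $\xi$-independent lifting of the boundary data, expand the zero-trace corrector in the Dirichlet eigenbasis of $A_e$ so that the coefficients are rational in $\xi$, and deduce the analytic continuation to $\mathbb{C}\setminus\{-\lambda_k\}_{k=1}^\infty$ from locally uniform $H^2(\Omega)$-convergence of the series via the elliptic bound $\|u\|_{H^2(\Omega)}\le C\|A_e u\|_{L^2(\Omega)}$. The only cosmetic differences are that the paper lifts with a generic trace-theorem extension $H$ of $-\chi h$ (so its source is $(A_e+\xi)H$ rather than your cleaner $-\xi G$, with $G$ the same $A_e$-harmonic lifting used for $G_k$ in Lemma \ref{l1}), and that you spell out the low/high-frequency splitting behind the uniform bound on $\lambda_\ell/\left|\lambda_\ell+\xi\right|$ over compact sets, a point the paper simply asserts.
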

\begin{proof}
Since $h\in H^{\frac32}(\partial\Omega)$, noting the Sobolev trace theorem, we can find a function $H\in H^2(\Omega)$ such that $H|_{\partial\Omega} = -\chi h$ in the trace sense and satisfies the estimate
\begin{equation}\label{esti-H}
\|H\|_{H^2(\Omega)} \le C\|h\|_{H^{\frac32}(\partial\Omega)}.
\end{equation}
Letting $W_\xi:= U_\xi + H$, it is not difficult to see that the function $W_\xi$ reads the following homogeneous boundary value problem
$$
\left\{ \begin{array}{rcll} 
(A_e + \xi) W_\xi & = & (A_e + \xi)H & \mbox{in } \Omega ,\\
  W_\xi & = & 0 & \mbox{on}\ \partial\Omega.
\end{array}
\right.
$$
Now from the argument of the Fourier series expansion, we assert that the function $W_\xi$ can be represented by
\begin{equation}\label{sol-W_xi}
W_\xi = \sum_{k=1}^\infty \sum_{\ell=1}^{m_k} \langle W_\xi,\varphi_{k,\ell} \rangle_{L^2(\Omega)} \varphi_{k,\ell}
= \sum_{k=1}^\infty \frac{1}{\xi + \lambda_k} \sum_{\ell=1}^{m_k} \big\langle (A_e + \xi)H,\varphi_{k,\ell} \big\rangle_{L^2(\Omega)} \varphi_{k,\ell},
\end{equation}
which combined with the relation $W_\xi=U_\xi + H$ gives the representation formula of the solution $U_\xi$:
\begin{equation}\label{sol-U_xi}
U_\xi =\sum_{k=1}^\infty \sum_{\ell=1}^{m_k} \langle U_\xi,\varphi_{k,\ell} \rangle_{L^2(\Omega)} \varphi_{k,\ell} = \sum_{k=1}^\infty \frac{1}{\xi + \lambda_k} \sum_{\ell=1}^{m_k} \big\langle (A_e + \xi)H,\varphi_{k,\ell} \big\rangle_{L^2(\Omega)} \varphi_{k,\ell} - H.
\end{equation}
We next show that the function $\mathbb R_+\ni\xi \mapsto W_\xi\in H^2(\Omega)$) can be analytically extended to the complex domain $\xi\in \mathbb C\setminus \{-\lambda_k\}_{k=1}^\infty$. For this, fixing any compact subset $K$ of $\mathbb C\setminus \{-\lambda_k\}_{k=1}^\infty$, on the basis of the representation formula \eqref{sol-U_xi} and the estimate $\|u\|_{H^2(\Omega)} \le C\|A_e u\|_{L^2(\Omega)}$, by a directly calculation we arrive at the inequalities
\begin{align*}
\|W_\xi(\cdot\,)\|_{H^2(\Omega)}^2 
& \le C\|A_e W_\xi(\cdot\,)\|_{L^2(\Omega)}^2\\
& \le C\sum_{k=1}^\infty  \frac{\lambda_k^2}{(\xi + \lambda_k)^2} \left| \sum_{\ell=1}^{m_k} \langle (A_e + \xi)H,\varphi_{k,\ell} \rangle_{L^2(\Omega)}\right|^2.
\end{align*}
Since $K\subset\subset \mathbb C\setminus \{-\lambda_k\}_{k=1}^\infty$, we see that $\frac{\lambda_k^2}{(\xi + \lambda_k)^2} \le C_K$, where the constant $C_K$ denotes a general positive constant which is independent of $k$ and may change by lines, which further implies that
\begin{align*}
\|W_\xi(\cdot\,)\|_{H^2(\Omega)}^2 
\le  C_K \|(A_e+\xi)H\|_{L^2(\Omega)}^2 \le C_K\|h\|_{H^{\frac32}(\partial\Omega)}.
\end{align*}
Here the last inequality is due to the estimate \eqref{esti-H}. Consequently, we assert that the series in \eqref{sol-W_xi} is uniformly convergent for any $\xi\in K$. Moreover, noting that the compact subset $K$ can be arbitrarily chosen in $\mathbb C\setminus \{-\lambda_k\}_{k=1}^\infty$, we have the required conclusion of the function $W_\xi$. Finally, since $H$ does not depend on $\xi$, we obtain that the map $\mathbb R_+ \ni\xi\mapsto U_\xi\in H^2(\Omega)$ can be analytical extended to the set $\mathbb C\setminus \{-\lambda_k\}_{k=1}^\infty$. We finish the proof of the lemma.
\end{proof}
We associate the problem \eqref{t1h} with the DtN mapping $\Lambda_\xi: H^{\frac32}(\partial\Omega) \to L^2(\Gamma)$ which is defined by
$$
\Lambda_\xi:h\longmapsto D \frac{\partial U_\xi}{\partial \nu} |_{\Gamma},\quad \xi>0.
$$
Similarly, we can define the DtN map 
$$
\tilde \Lambda_\xi: h\in H^{\frac32}(\partial\Omega)\longmapsto \tilde D \frac{\partial \tilde U_\xi}{\partial \nu} |_{\Gamma}
$$
corresponding to the elliptic operator $\tilde A_e$:
$$
\tilde A_e:=-\text{div} (\tilde D\nabla) + \tilde\mu_a + \tilde\mu_f.
$$
On the basis of the results in Lemma \ref{obser-sub}, we can obtain the following lemma.
\begin{lemma}\label{lem-Lambda}
The condition \eqref{obser-sub} implies $\Lambda_\xi = \tilde \Lambda_\xi$ for any $\xi>0$.
\end{lemma}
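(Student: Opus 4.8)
The plan is to move from the time domain to the frequency domain via the Laplace transform, which turns the parabolic subproblems \eqref{Ue-sub} into the elliptic problem \eqref{t1h}, and then to read the equality of the two DtN maps directly off the identities \eqref{obser-sub}.

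First I would fix $\xi>0$ and $k\in\mathbb N$ and Laplace-transform \eqref{Ue-sub} in time. Setting $\hat u_{e,k}(\cdot,\xi):=\int_0^\infty e^{-\xi t}u_{e,k}(\cdot,t)\,dt$, the Corollary ensures this Bochner integral converges in $H^2(\Omega)$, and the closed operator $A_e$ commutes with it. Using the vanishing initial condition $u_{e,k}(\cdot,0)=0$, the transform of $\partial_t u_{e,k}$ is $\xi\hat u_{e,k}$, so $\hat u_{e,k}(\cdot,\xi)$ solves $(A_e+\xi)\hat u_{e,k}=0$ in $\Omega$ with boundary trace $\hat u_{e,k}|_{\partial\Omega}=c_k\hat\psi_k(\xi)\,\chi\eta_k$, where $\hat\psi_k(\xi):=\int_0^\infty e^{-\xi t}\psi_k(t)\,dt$. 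Let $U_\xi^{(k)}$ denote the solution of \eqref{t1h} with $h=\eta_k$; since $-\xi$ is not an eigenvalue of $A_e$ for $\xi>0$, the operator $A_e+\xi$ is boundedly invertible and the solution of \eqref{t1h} is unique, so by linearity $\hat u_{e,k}(\cdot,\xi)=c_k\hat\psi_k(\xi)\,U_\xi^{(k)}$ (the cutoff $\chi$ matches in both problems).

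Next I would transform the measurement identity \eqref{obser-sub}. The map $v\mapsto D\frac{\partial v}{\partial\nu}|_\Gamma$ is bounded and linear from $H^2(\Omega)$ into $L^2(\Gamma)$, and the Corollary guarantees $e^{-\xi t}\frac{\partial u_{e,k}}{\partial\nu}(\cdot,t)\in L^1(\mathbb R_+;L^2(\Gamma))$, so this operator commutes with the Laplace integral; applying it to both sides of \eqref{obser-sub} gives $D\frac{\partial \hat u_{e,k}}{\partial\nu}|_\Gamma=\tilde D\frac{\partial \widehat{\tilde u}_{e,k}}{\partial\nu}|_\Gamma$. Substituting the representation from the previous step yields
\[
c_k\hat\psi_k(\xi)\,D\frac{\partial U_\xi^{(k)}}{\partial\nu}\Big|_\Gamma
=c_k\hat\psi_k(\xi)\,\tilde D\frac{\partial \tilde U_\xi^{(k)}}{\partial\nu}\Big|_\Gamma.
\]
Since $\psi_k\ge0$ and $\psi_k\equiv1$ on $[t_{2k-1},\infty)$, one has $\hat\psi_k(\xi)\ge\xi^{-1}e^{-\xi t_{2k-1}}>0$, and $c_k>0$; dividing by the nonzero scalar $c_k\hat\psi_k(\xi)$ gives $\Lambda_\xi\eta_k=\tilde\Lambda_\xi\eta_k$ for every $k\in\mathbb N$.

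Finally I would promote this to an operator identity. By Lemma \ref{lem-U-analy} (together with the lifting estimate \eqref{esti-H}) the solution map $h\mapsto U_\xi$ is bounded on $H^{\frac32}(\partial\Omega)$, so $\Lambda_\xi$ and $\tilde\Lambda_\xi$ are bounded linear operators from $H^{\frac32}(\partial\Omega)$ into $L^2(\Gamma)$; as they agree on each $\eta_k$, they agree on the dense subspace $\mathrm{span}\{\eta_k\}$ and hence everywhere, which is the claim $\Lambda_\xi=\tilde\Lambda_\xi$ for all $\xi>0$. The hard part will be the justification that the Laplace transform genuinely commutes with the elliptic operator and with the Neumann trace, i.e. that $\hat u_{e,k}$ really is the $H^2(\Omega)$ solution of \eqref{t1h} with the stated boundary data; this is precisely what the $L^\infty(\mathbb R_+;H^2(\Omega))$ regularity of Lemma \ref{l1} and the integrability supplied by the Corollary secure, so the remainder is bookkeeping rather than a genuine difficulty.
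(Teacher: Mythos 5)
Your proposal is correct and follows essentially the same route as the paper's proof: Laplace-transforming the identities \eqref{obser-sub}, using the positivity of $c_k(\mathcal L\psi_k)(\xi)$ to cancel the scalar factor and obtain $\Lambda_\xi\eta_k=\tilde\Lambda_\xi\eta_k$, and then concluding by linearity and the density of ${\rm span}\{\eta_k\}$ in $H^{\frac32}(\partial\Omega)$. The only difference is that you spell out the justification (via Lemma \ref{l1} and the Corollary) that the transformed solution solves \eqref{t1h} and that the Neumann trace commutes with the Laplace integral, details the paper leaves implicit.
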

\begin{proof}
Applying the Laplace transform $\mathcal L$ in time on both sides of \eqref{obser-sub} implies
$$
\Lambda_\xi (c_k\eta_k\mathcal L\psi_k) = \tilde \Lambda_\xi (c_k\eta_k\mathcal L\psi_k),\quad\xi>0,\ k\in\mathbb N.
$$
Moreover, in view of the assumption $\psi_k\ge0,\not\equiv0$, we see that the Laplace transform $(\mathcal L\psi_k)(\xi)>0$ is valid for all $\xi>0$, which combined with the fact $c_k>0$ and the linearity of $\Lambda_\xi$ and $\tilde \Lambda_\xi$ implies
\[
\Lambda_\xi h = \tilde \Lambda_\xi h,\quad \forall \xi>0,\ \forall\,h\in {\rm span}\{\eta_k\}_{k\in\mathbb N}.
\]
Finally, $\Lambda_\xi = \tilde \Lambda_\xi$ is valid for any $\xi>0$ due to the density of ${\rm span}\{\eta_k\}$ in $H^{\frac32}(\partial\Omega)$. We finish the proof of the lemma.
\end{proof}
Recalling the notations of the Dirichlet eigensystem $\{\lambda_k, \{\varphi_{k,\ell}\}_{\ell=1}^{m_k}\}_{k=1}^\infty$ of the elliptic operator $A_e$, we define the sequence of functions:
\[
\Theta_k(x,x'):=D(x)D(x')\sum_{\ell=1}^{m_k} \frac{\partial \varphi_{k,\ell}}{\partial \nu}(x) \frac{\partial \varphi_{k,\ell}}{\partial\nu}(x'),\quad x, x'\in\partial\Omega,\ k\in\mathbb N.
\]
\begin{lemma}\label{lem-Theta}
The observation data \eqref{obser} implies $ \lambda_k = \tilde \lambda_k$ and 
\begin{equation}\label{eq-theta}
\Theta_k(x,x') = \tilde \Theta_k(x,x'), \quad  x\in\Gamma,\ x'\in\Gamma_{\rm in}',
\end{equation}
where $k\in \mathbb N$ and $\{\lambda_k, \{\varphi_{k,\ell}\}_{\ell=1}^{m_k}\}_{k=1}^\infty$ and $\{\tilde \lambda_k, \{\tilde \varphi_{k,\ell}\}_{\ell=1}^{\tilde m_k}\}_{k=1}^\infty$ are the Dirichlet eigensystems of the elliptic operators $A_e$ and $\tilde A_e$.
\end{lemma}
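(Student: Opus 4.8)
The plan is to read off, from the resolvent representation \eqref{sol-U_xi} of $U_\xi$, an explicit spectral expansion of the Dirichlet-to-Neumann map $\Lambda_\xi$ as a meromorphic operator-valued function of $\xi$ whose poles sit exactly at the points $-\lambda_k$ and whose residues encode the kernels $\Theta_k$. Once this is in hand, the identity $\Lambda_\xi=\tilde\Lambda_\xi$ for $\xi>0$ furnished by Lemma \ref{lem-Lambda} propagates to an identity of meromorphic functions, and comparing the singular structure will yield both $\lambda_k=\tilde\lambda_k$ and the kernel equality \eqref{eq-theta} simultaneously.

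First I would convert the coefficients $\langle (A_e+\xi)H,\varphi_{k,\ell}\rangle_{L^2(\Omega)}$ appearing in \eqref{sol-U_xi} into boundary integrals. Applying Green's second identity to the pair $(H,\varphi_{k,\ell})$ and using $A_e\varphi_{k,\ell}=\lambda_k\varphi_{k,\ell}$, $\varphi_{k,\ell}|_{\partial\Omega}=0$ and $H|_{\partial\Omega}=-\chi h$, one obtains
\[
\langle (A_e+\xi)H,\varphi_{k,\ell}\rangle_{L^2(\Omega)}=(\xi+\lambda_k)\langle H,\varphi_{k,\ell}\rangle_{L^2(\Omega)}-\int_{\partial\Omega}D\frac{\partial\varphi_{k,\ell}}{\partial\nu}\,\chi h\,dS.
\]
Substituting this into \eqref{sol-U_xi}, the factor $(\xi+\lambda_k)$ cancels the resolvent denominator and the resulting series $\sum_{k,\ell}\langle H,\varphi_{k,\ell}\rangle_{L^2(\Omega)}\varphi_{k,\ell}$ reconstructs $H$ by completeness, which cancels the $-H$; what remains is the compact expansion
\[
U_\xi=-\sum_{k=1}^\infty\frac{1}{\xi+\lambda_k}\sum_{\ell=1}^{m_k}\Big(\int_{\partial\Omega}D\frac{\partial\varphi_{k,\ell}}{\partial\nu}\,\chi h\,dS\Big)\varphi_{k,\ell}.
\]
Taking $D\frac{\partial}{\partial\nu}(\cdot)|_\Gamma$ term by term and recalling the definition of $\Theta_k$, the map $\Lambda_\xi$ is realized as the integral operator on $\Gamma\times\partial\Omega$ with kernel $-\sum_k(\xi+\lambda_k)^{-1}\Theta_k(x,x')\chi(x')$. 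The justification that the trace and the normal derivative may be taken inside the sum (uniform $H^2$-convergence on compact $\xi$-sets, exactly as in Lemma \ref{lem-U-analy}) is routine but should be recorded.

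Next I would exploit meromorphy in $\xi$. By Lemma \ref{lem-U-analy} both kernels extend to meromorphic functions on $\mathbb C$ with at most simple poles at $\{-\lambda_k\}$ and $\{-\tilde\lambda_k\}$ respectively; since they coincide for all $\xi>0$ by Lemma \ref{lem-Lambda}, they coincide as meromorphic functions, hence have identical singular sets and residues. The residue of $\Lambda_\xi$ at $-\lambda_k$ is the operator with kernel $-\Theta_k(x,x')\chi(x')$. To deduce $\lambda_k=\tilde\lambda_k$ I would argue that each such residue is genuinely nonzero: were $\frac{\partial\varphi_{k,\ell}}{\partial\nu}$ to vanish on the relatively open set $\Gamma$ (or $\Gamma_{\rm in}'$), then together with $\varphi_{k,\ell}|_{\partial\Omega}=0$ the eigenfunction would have vanishing Cauchy data there and, by unique continuation, vanish identically — a contradiction. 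Hence no $-\lambda_k$ is a removable singularity of the common function, forcing $\{-\lambda_k\}=\{-\tilde\lambda_k\}$; since both eigenvalue sequences are strictly increasing, $\lambda_k=\tilde\lambda_k$ for every $k$. Matching the residues at $-\lambda_k=-\tilde\lambda_k$ then gives $\Theta_k(x,x')\chi(x')=\tilde\Theta_k(x,x')\chi(x')$, and restricting $x'$ to $\Gamma_{\rm in}'$, where $\chi\equiv1$, yields \eqref{eq-theta}.

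The main obstacle I anticipate is precisely this nonvanishing of the residues: analytic continuation alone only shows the two families of poles are compatible, and to promote this to $\lambda_k=\tilde\lambda_k$ one must exclude spurious cancellation, which is where the unique continuation property of the Dirichlet eigenfunctions (no vanishing normal derivative on a relatively open boundary patch) is indispensable. A secondary technical point is the termwise trace and differentiation in the kernel expansion, which must be controlled uniformly on compact subsets of $\mathbb C$ away from the poles.
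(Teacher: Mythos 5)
Your proposal is correct and follows essentially the same route as the paper: the same boundary-integral spectral representation of $U_\xi$ (the paper obtains it by pairing \eqref{t1h} with $\varphi_{k,\ell}$ and integrating by parts), the identity $\Lambda_\xi=\tilde\Lambda_\xi$ from Lemma \ref{lem-Lambda} extended to $\mathbb C\setminus\{-\lambda_k,-\tilde\lambda_k\}$ via the analyticity of Lemma \ref{lem-U-analy}, and then a comparison of poles and residues (the paper phrases this as contour integration over a small disk isolating $-\lambda_{k_0}$, which is your residue comparison in disguise). Your explicit unique-continuation argument for the nonvanishing of the residues, via the diagonal of $\Theta_k$ on $\Gamma\cap\Gamma_{\rm in}'$, is exactly the justification behind the paper's brief assertion that $\chi\Theta_{k_0}$ cannot vanish on $\Gamma\times\partial\Omega$.
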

\begin{proof}
In the first step, we will give other type of representation formula in \eqref{sol-U_xi}.  Taking the scalar product on both sides of \eqref{t1h} with eigenfunction of $\varphi_{k,\ell}$, and by integration by parts we can see that the Fourier coefficient $\langle U_\xi,\varphi_{k,\ell}\rangle_{L^2(\Omega)}$ in \eqref{sol-U_xi} also reads
\[
\langle U_\xi,\varphi_{k,\ell}\rangle_{L^2(\Omega)} 
=-\frac1{\xi + \lambda_k} \big\langle \,\chi h, D \frac{\partial \varphi_{k,\ell}} {\partial\nu}\big \rangle_{L^2(\partial\Omega)} ,
\]
which further implies the representation of $U_\xi$ as follows
\[
U_\xi = -\sum_{k=1}^\infty \frac1{\xi + \lambda_k} {\sum_{\ell=1}^{m_k} \big\langle \,\chi h, D \frac{\partial \varphi_{k,\ell}} {\partial\nu}\big \rangle_{L^2(\partial\Omega)} \varphi_{k,\ell}} .
\]
On the other hand, in view of the result in Lemma \ref{lem-Lambda}, we have
\[
D\frac{\partial U_\xi}{\partial\nu} \big|_{\Gamma} 
= \tilde D\frac{\partial \tilde U_\xi}{\partial\nu} \big|_{\Gamma},\quad\xi>0,\ h\in H^{\frac32}(\partial\Omega).
\]
Now from the definition of the functions $\Theta_k$ and $\tilde \Theta_k$, noting the notation of the scalar product $\langle \cdot,\cdot \rangle_{L^2(\Omega)}$, we can rephrase the above identity as follows
$$
\begin{aligned}
\sum_{k=1}^\infty \frac1{\xi + \lambda_k} \int_{\partial\Omega} \Theta_k(x,x') \chi(x') h(x')dx'
= \sum_{k=1}^\infty\frac1{\xi + \tilde \lambda_k} \int_{\partial\Omega} \tilde\Theta_k(x,x') \chi(x') h(x')dx',\quad x\in\Gamma.
\end{aligned}
$$
In view of Lemma \ref{lem-U-analy}, the above series are uniformly convergent for $\xi\in K$, where $K$ is any compact subset of $\mathbb C\setminus\{-\lambda_k,-\tilde\lambda_k|\ k\in\mathbb N\}$, therefore from the unique continuation of analytic functions, it follows that
\begin{equation}\label{eq-Theta}
\begin{aligned}
\sum_{k=1}^\infty \frac1{z + \lambda_k} \int_{\partial\Omega} \Theta_k(x,x') \chi(x') h(x')dx'
= \sum_{k=1}^\infty\frac1{z + \tilde \lambda_k} \int_{\partial\Omega} \tilde\Theta_k(x,x') \chi(x') h(x')dx'
\end{aligned}
\end{equation}
for any $x\in\Gamma$ and $z\in \mathbb C\setminus\{-\lambda_k, -\tilde\lambda_k\}_{k=1}^\infty$. 
Now one can show that $\lambda_k = \tilde\lambda_k$, $k\in\mathbb N$ in a similar way to \cite[Theorem 4.2]{Sakamoto11}. Indeed, if not, we assume there exists $k_0\in\mathbb N$ such that $-\lambda_{k_0} \neq \tilde \lambda_{k_0}$, then we can take a sufficiently small disk which includes $-\lambda_{k_0}$ and does not include $\{-\tilde\lambda_k\}_{k=1}^\infty \cup \{-\lambda_k\}_{k\neq k_0}$. Integrating \eqref{eq-Theta} in this disk, we see that
$$
\begin{aligned}
\int_{\partial\Omega} \Theta_{k_0}(x,x') \chi(x') h(x')dx'
= 0,\quad x\in\Gamma.
\end{aligned}
$$
Since here $h\in H^{\frac32}(\partial\Omega)$ is arbitrary chosen, we see from the above identity that $\chi\Theta_{k_0}$ must be vanished on $\Gamma\times\partial\Omega$, which is impossible. This means that $\lambda_k = \tilde\lambda_k$ for any $k\in \mathbb N$.  Repeating the argument used in the proof of \cite[(2.8)]{Kian20} implies that $\Theta_k = \tilde\Theta_k$ on $\Gamma\times\Gamma_{\rm in}'$. This finishes the proof of the lemma.
\end{proof}
On the basis of the results in Lemma \ref{lem-Theta}, we next show a lemma related to the algebraic multiplicity of the eigenvalue $\lambda_k$ and the corresponding eigenfunctions.
\begin{corollary}\label{lem-orth}
Under the same assumption in Lemma \ref{lem-Theta}, then for any $k\in\mathbb N$, there exist orthonormal sets of eigenfunctions $\{\phi_{k,\ell}\}_{\ell=1}^{m_k}$ and $\{\tilde\phi_{k,\ell}\}_{\ell=1}^{\tilde m_k}$ related to $\lambda_k$ of the elliptic operators $A_e$ and $\tilde A_e$ respectively such that
\begin{equation}\label{t1m}
m_k = \tilde m_k,\quad D \frac{\partial \phi_{k,\ell}}{\partial \nu} = \tilde D \frac{\partial \tilde\phi_{k,\ell}}{\partial \nu} \mbox{ on }\partial\Omega,\quad  k\in\mathbb N,\ \ell=1,\ldots,m_k.
\end{equation}
\end{corollary}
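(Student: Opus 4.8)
The plan is to recover, from the single boundary kernel identity \eqref{eq-theta}, the complete finite-dimensional data attached to the eigenvalue $\lambda_k$: its multiplicity together with the boundary normal traces of an orthonormal eigenbasis, determined up to an orthogonal change of basis. Fix $k\in\mathbb N$ and abbreviate $v_\ell:=D\frac{\partial\varphi_{k,\ell}}{\partial\nu}$ and $\tilde v_\ell:=\tilde D\frac{\partial\tilde\varphi_{k,\ell}}{\partial\nu}$, so that $\Theta_k(x,x')=\sum_{\ell=1}^{m_k}v_\ell(x)v_\ell(x')$ and $\tilde\Theta_k(x,x')=\sum_{\ell=1}^{\tilde m_k}\tilde v_\ell(x)\tilde v_\ell(x')$. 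The first ingredient I would establish is a unique continuation statement: on every nonempty relatively open $\omega\subset\partial\Omega$ the family $\{v_\ell\}_{\ell=1}^{m_k}$ is linearly independent. Indeed, a relation $\sum_\ell a_\ell v_\ell=0$ on $\omega$ means that the $\lambda_k$-eigenfunction $\psi:=\sum_\ell a_\ell\varphi_{k,\ell}$ carries vanishing Cauchy data on $\omega$ (it already vanishes on $\partial\Omega$ as a Dirichlet eigenfunction, and now its co-normal derivative vanishes on $\omega$); unique continuation for $(A_e-\lambda_k)\psi=0$ then forces $\psi\equiv0$ in $\Omega$, hence $a_\ell=0$ for all $\ell$. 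The same holds for $\{\tilde v_\ell\}$.

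With this at hand I would first settle $m_k=\tilde m_k$ by a rank count. Regarding $\Theta_k$ as the kernel of the operator $T_k\colon L^2(\Gamma_{\rm in}')\to L^2(\Gamma)$ given by $T_k g=\sum_\ell v_\ell\,\langle v_\ell,g\rangle_{L^2(\Gamma_{\rm in}')}$, the linear independence of $\{v_\ell\}$ on $\Gamma$ and on $\Gamma_{\rm in}'$ shows that $\mathrm{rank}\,T_k=m_k$, and likewise $\mathrm{rank}\,\tilde T_k=\tilde m_k$; since \eqref{eq-theta} reads $T_k=\tilde T_k$, we obtain $m_k=\tilde m_k=:m$. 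Next, the equality of ranges $\mathrm{span}\{v_\ell|_\Gamma\}=\mathrm{Ran}\,T_k=\mathrm{Ran}\,\tilde T_k=\mathrm{span}\{\tilde v_\ell|_\Gamma\}$ produces an invertible $P\in\mathrm{GL}(m)$ with $\tilde v_j=\sum_i P_{ji}v_i$ on $\Gamma$, while the same argument applied to $T_k^{*}$ gives $Q\in\mathrm{GL}(m)$ with $\tilde v_j=\sum_i Q_{ji}v_i$ on $\Gamma_{\rm in}'$. Substituting both into \eqref{eq-theta} and using the linear independence of the tensor products $\{v_i\otimes v_{i'}\}$ on $\Gamma\times\Gamma_{\rm in}'$ forces $P^{\top}Q=I$; on the overlap $\Gamma\cap\Gamma_{\rm in}'$ the two representations together with the linear independence there give $P=Q$. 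Hence $P^{\top}P=I$, so $P$ is orthogonal and the single relation $\tilde v_j=\sum_i P_{ji}v_i$ is valid on $\Gamma\cup\Gamma_{\rm in}'$, which, $\Gamma_{\rm in}'$ being chosen with $\partial\Omega\setminus\Gamma\subset\Gamma_{\rm in}'$, covers all of $\partial\Omega$.

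It then only remains to rotate one of the two bases. Keeping $\phi_{k,\ell}:=\varphi_{k,\ell}$ and setting $\tilde\phi_{k,\ell}:=\sum_j P_{j\ell}\tilde\varphi_{k,j}$, orthogonality of $P$ makes $\{\tilde\phi_{k,\ell}\}_{\ell=1}^{m}$ again an orthonormal basis of the $\lambda_k$-eigenspace of $\tilde A_e$, and a one-line computation gives $\tilde D\frac{\partial\tilde\phi_{k,\ell}}{\partial\nu}=\sum_j P_{j\ell}\tilde v_j=\sum_i(P^{\top}P)_{\ell i}v_i=v_\ell=D\frac{\partial\phi_{k,\ell}}{\partial\nu}$ on $\partial\Omega$, which is precisely \eqref{t1m}. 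This mirrors the reasoning of \cite[Theorem 4.2]{Sakamoto11} and \cite[(2.8)]{Kian20}.

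I expect the genuine difficulty to lie in the middle step, namely in upgrading the two a priori different changes of basis $P$ (read off on $\Gamma$) and $Q$ (read off on $\Gamma_{\rm in}'$) into one and the same orthogonal matrix valid on the whole boundary. This is where both hypotheses are indispensable: the nonempty overlap $\Gamma\cap\Gamma_{\rm in}'$ guaranteed by \eqref{obser-gamma} is what identifies $P$ with $Q$, and the unique continuation property is what supplies the linear independence on arbitrary open pieces that makes every rank count and coefficient-matching step legitimate. The remaining bookkeeping, including the passage from $\Gamma\cup\Gamma_{\rm in}'$ to $\partial\Omega$, is routine.
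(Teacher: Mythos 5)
Your proof is correct, and it follows the same global strategy as the paper's — unique continuation to get linear independence of the normal fluxes on arbitrary open subboundaries, the kernel identity \eqref{eq-theta} to transfer the spectral data, and a final orthogonal rotation of one eigenbasis — but it differs in one genuine respect: where the paper invokes the algebraic factorization result of \cite{Canuto01} (stated as Lemma \ref{lem-canuto}) as a black box and only verifies its hypotheses, you reprove that result from scratch. Your rank argument for the operator $T_k$ yields $m_k=\tilde m_k$, the range identities on $\Gamma$ and on $\Gamma_{\rm in}'$ produce the two transition matrices $P$ and $Q$, tensor-product independence gives $P^{\top}Q=I$, and the nonempty overlap $\Gamma\cap\Gamma_{\rm in}'$ identifies $P=Q$, hence $P$ is orthogonal; your final rotation $\tilde\phi_{k,\ell}:=\sum_j P_{j\ell}\tilde\varphi_{k,j}$ is precisely the paper's concluding step with $O_k=P^{\top}$. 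What your route buys is self-containedness and an explicit accounting of where the geometric hypotheses enter: the overlap is what forces $P=Q$, and the covering choice $\partial\Omega\setminus\Gamma\subset\Gamma_{\rm in}'$ is what upgrades the relation from $\Gamma\cup\Gamma_{\rm in}'$ to all of $\partial\Omega$ — a point the paper leaves implicit inside the citation, since Lemma \ref{lem-canuto} formally requires $\Gamma\cup\Gamma'=\partial\Omega$ for the sub-boundaries on which the kernel identity holds. What the paper's route buys is brevity, deferring the linear algebra to the literature.
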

Before giving a proof of the lemma, we introduce an algebraic result in \cite{Canuto01} which takes the following form in our context. 
\begin{lemma}\label{lem-canuto}
Let $M_1,M_2\in\mathbb N$ and consider $f_\ell\in C(\partial\Omega),$ $\ell=1,\ldots,M_1$ and $g_\ell\in C(\partial\Omega),$ $\ell=1,\ldots,M_2$. Assume that
\begin{equation}\label{ll1a}
\sum_{\ell=1}^{M_1}f_\ell( x)f_\ell( x')=\sum_{\ell=1}^{M_2}g_\ell( x)g_\ell( x'),\quad x\in\Gamma,\  x'\in \Gamma',
\end{equation}
and that the restrictions $\{f_\ell|_{\Gamma\cap \Gamma'}\}_{\ell=1}^{M_1}$ and $\{g_\ell|_{\Gamma\cap\Gamma'}\}_{\ell=1}^{M_2}$ are linearly independent respectively, where $\Gamma$ and $\Gamma'$ are open subboundaries of $\partial\Omega$ and such that $\Gamma\cup\Gamma'=\partial\Omega$ and $\Gamma\cap\Gamma'\ne\emptyset$. Then there holds $M_1=M_2$ and there exists an $M_1\times M_1$ orthogonal matrix $ O$ such that
\[
(f_1,\ldots,f_{M_1})^{\mathrm T}( x)= O(g_1,\ldots,g_{M_1})^{\mathrm T}( x),\quad x\in\partial\Omega,
\]
where $(\,\cdot\,)^{\mathrm T}$ stands for the transpose.
\end{lemma}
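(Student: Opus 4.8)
The plan is to recast the separable identity \eqref{ll1a} in vector form and then argue entirely with finite-dimensional linear algebra, using the overlap $\Gamma\cap\Gamma'$ as the locus where both families are linearly independent. Introduce the vector-valued maps $F=(f_1,\ldots,f_{M_1})^{\mathrm T}$ and $G=(g_1,\ldots,g_{M_2})^{\mathrm T}$, so that \eqref{ll1a} reads $F(x)^{\mathrm T}F(x')=G(x)^{\mathrm T}G(x')$ for $x\in\Gamma$, $x'\in\Gamma'$. The first observation I would record is that the linear independence of $\{f_\ell|_{\Gamma\cap\Gamma'}\}$ forces the image set $\{F(x):x\in\Gamma\cap\Gamma'\}$ to span $\mathbb R^{M_1}$: any vector $v$ orthogonal to every $F(x)$ would give $\sum_\ell v_\ell f_\ell\equiv 0$ on $\Gamma\cap\Gamma'$, contradicting independence. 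The same reasoning gives that $\{G(x):x\in\Gamma\cap\Gamma'\}$ spans $\mathbb R^{M_2}$.

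First I would prove $M_1=M_2$. Restricting \eqref{ll1a} to $(\Gamma\cap\Gamma')^2$, consider the scalar kernel $K(x,x')=\sum_\ell f_\ell(x)f_\ell(x')=\sum_\ell g_\ell(x)g_\ell(x')$ and its sections $x'\mapsto K(x,x')$ as $x$ ranges over $\Gamma\cap\Gamma'$. Each section is a linear combination of the $f_\ell|_{\Gamma\cap\Gamma'}$, so the span of all sections is contained in $\mathrm{span}\{f_\ell|_{\Gamma\cap\Gamma'}\}$; conversely, since $\{F(x)\}$ spans $\mathbb R^{M_1}$ one can recover each $f_\ell|_{\Gamma\cap\Gamma'}$ as a combination of sections, so the span of sections equals $\mathrm{span}\{f_\ell|_{\Gamma\cap\Gamma'}\}$, a space of dimension $M_1$ by independence. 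Running the identical computation from the $g$-side shows this same dimension equals $M_2$, whence $M_1=M_2=:M$.

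Next I would build the orthogonal matrix on the overlap. Using the spanning property, choose points $x_1,\ldots,x_M\in\Gamma\cap\Gamma'$ for which $F(x_1),\ldots,F(x_M)$ form a basis of $\mathbb R^M$, and assemble the invertible matrix $\mathbf F=[F(x_1)|\cdots|F(x_M)]$. The kernel identity gives $\mathbf F^{\mathrm T}\mathbf F=\mathbf G^{\mathrm T}\mathbf G$ with $\mathbf G=[G(x_1)|\cdots|G(x_M)]$; positive definiteness of this common Gram matrix forces $\mathbf G$ to be invertible as well. Setting $O:=\mathbf F\mathbf G^{-1}$ one checks $O^{\mathrm T}O=(\mathbf G^{\mathrm T})^{-1}(\mathbf F^{\mathrm T}\mathbf F)\mathbf G^{-1}=I$, so $O$ is orthogonal and $F(x_i)=OG(x_i)$ for each $i$. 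To upgrade this to $F=OG$ on all of $\Gamma\cap\Gamma'$, observe that for any $x\in\Gamma\cap\Gamma'$ the identity $F(x)^{\mathrm T}F(x_j)=G(x)^{\mathrm T}G(x_j)$, combined with $F(x_j)=OG(x_j)$ and orthogonality of $O$, yields $\langle F(x)-OG(x),F(x_j)\rangle=0$ for every $j$; since $\{F(x_j)\}$ is a basis, $F(x)=OG(x)$.

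Finally I would extend the relation from $\Gamma\cap\Gamma'$ to the whole boundary, and I expect this step to be the main obstacle to state cleanly, precisely because \eqref{ll1a} is only available for the asymmetric pair $x\in\Gamma$, $x'\in\Gamma'$; this is exactly where the geometric hypotheses $\Gamma\cap\Gamma'\ne\emptyset$ and $\Gamma\cup\Gamma'=\partial\Omega$ do the essential work. For $x\in\Gamma$, test \eqref{ll1a} against $x'\in\Gamma\cap\Gamma'\subset\Gamma'$ and use $F(x')=OG(x')$ to obtain $\langle O^{\mathrm T}F(x)-G(x),G(x')\rangle=0$ for all such $x'$; since $\{G(x')\}$ spans $\mathbb R^M$, this gives $F(x)=OG(x)$ on $\Gamma$. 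Symmetrically, for $x'\in\Gamma'$ test against $x\in\Gamma\cap\Gamma'\subset\Gamma$ to get $F=OG$ on $\Gamma'$. As $\partial\Omega=\Gamma\cup\Gamma'$, the two cases combine to give $(f_1,\ldots,f_M)^{\mathrm T}(x)=O(g_1,\ldots,g_M)^{\mathrm T}(x)$ for every $x\in\partial\Omega$, which is the asserted conclusion.
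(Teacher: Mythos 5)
Your proof is correct, but there is nothing in the paper to compare it against: the authors state this lemma as an algebraic result imported from Canuto--Kavian \cite{Canuto01} and give no proof of it, so your blind attempt supplies a genuinely self-contained, purely finite-dimensional argument. Every step checks out. Restricting \eqref{ll1a} to $(\Gamma\cap\Gamma')\times(\Gamma\cap\Gamma')$ is legitimate since a point of the overlap lies in both $\Gamma$ and $\Gamma'$; the spanning of $\mathbb R^{M_1}$ by $\{F(x):x\in\Gamma\cap\Gamma'\}$ is exactly dual to the assumed linear independence; the identification $M_1=M_2$ via the span of the kernel sections $x'\mapsto K(x,x')$ is sound (each section lies in $\mathrm{span}\{f_\ell|_{\Gamma\cap\Gamma'}\}$, and the spanning property lets you recover each $f_\ell|_{\Gamma\cap\Gamma'}$ as a combination of sections, so the two spans coincide and have dimension $M_1$ from the $f$-side and $M_2$ from the $g$-side); the Gram-matrix step $\mathbf F^{\mathrm T}\mathbf F=\mathbf G^{\mathrm T}\mathbf G$ forces $\mathbf G$ invertible and $O:=\mathbf F\mathbf G^{-1}$ orthogonal by direct computation; and the final bootstrapping correctly exploits the asymmetric form of \eqref{ll1a} (one variable in $\Gamma$, the other in $\Gamma'$) together with $\Gamma\cup\Gamma'=\partial\Omega$, which is indeed the only place the geometric hypotheses enter. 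Two minor observations, neither affecting correctness: continuity of the $f_\ell,g_\ell$ plays no role in your argument (it is inherited from the intended application, where these functions are weighted conormal derivatives of eigenfunctions); and your dimension-count step could be shortened, since once $x_1,\ldots,x_{M_1}\in\Gamma\cap\Gamma'$ are chosen with $F(x_1),\ldots,F(x_{M_1})$ a basis, the identity $\bigl(G(x_i)^{\mathrm T}G(x_j)\bigr)_{i,j}=\mathbf F^{\mathrm T}\mathbf F$ shows the vectors $G(x_1),\ldots,G(x_{M_1})\in\mathbb R^{M_2}$ have a positive definite Gram matrix, hence are linearly independent, giving $M_1\le M_2$ and, by symmetry, $M_1=M_2$.
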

Now on the basis of the above lemma, by an argument similarly to \cite{Kian20}, we can give the proof of Lemma \ref{lem-orth}.
\begin{proof}[\bf Proof of Lemma \ref{lem-orth}]
For any $k\in\mathbb N$, according to the settings in Lemma \ref{lem-canuto}, we denote $M_1:=m_k$, $M_2:=\tilde m_k$ and define the functions $f_\ell$ and $g_\ell$ as follows
$$
\begin{aligned}
 f_\ell(x) =& D(x)\frac{\partial \varphi_{k,\ell}}{\partial\nu}(x),\ \ell=1,\ldots,M_1,\\
 g_\ell(x) =& \tilde D(x)\frac{\partial \tilde\varphi_{k,\ell}}{\partial\nu}(x),\ \ell=1,\ldots,M_2,
\end{aligned}
\quad x\in\partial\Omega.
$$
It is not difficult to check that the above settings fulfill the assumptions in Lemma \ref{lem-canuto}. Indeed, by noting that $\varphi_{k,\ell}$ and $\tilde\varphi_{k,\ell}$ are the eigenfunctions to the elliptic operators $A_e$ and $\tilde A_e$ respectively, we deduce that $\varphi_{k,\ell}$ and $\tilde\varphi_{k,\ell}$ are actually in the function space $C^1(\overline\Omega)$  according to the regularity theories of elliptic equations, which further gives that the functions $f_\ell$, $\ell=1,\ldots,M_1$ and $g_\ell$, $\ell=1,\ldots,M_2$ lie in $C(\partial\Omega)$. 
Moreover, in view of the identities \eqref{eq-Theta} in Lemma \ref{lem-Theta}, the condition \eqref{ll1a} is fulfilled. On the other hand, on the basis of  the unique continuation principle of the solutions for elliptic equations, we see that $\{f_\ell\}_{\ell=1}^{M_1}$ and $\{g_\ell\}_{\ell=1}^{M_2}$ restricted to the subboundary $\Gamma_{\rm in}' \cap \Gamma$ are linearly independent respectively. Therefore, Lemma \ref{lem-canuto} implies that $m_k = \tilde m_k$ and there exists an $m_k\times m_k$ orthogonal matrix $ O_k$ such that
\[
\left(D \frac{\partial \varphi_{k,1}}{\partial\nu},\ldots, D \frac{\partial \varphi_{k,m_k}}{\partial\nu}\right)^{\mathrm T} = O_k\left(\tilde D \frac{\partial \tilde\varphi_{k,1}}{\partial\nu},\ldots, \tilde D \frac{\partial \tilde \varphi_{k,\tilde m_k}}{\partial\nu}\right)^{\mathrm T} \mbox{ on } \partial\Omega.
\]
Finally,  setting $\phi_{k,\ell} := \varphi_{k,\ell}$, $\ell=1,\ldots,m_k$ and
\[
\left(\tilde \phi_{k,1},\ldots, \tilde \phi_{k,m_k}\right)^{\mathrm T} := O_k\left(\tilde \varphi_{k,1},\ldots, \tilde \varphi_{k,m_k}\right)^{\mathrm T},
\]
we see that $\{\phi_{k,\ell}\}_{\ell=1}^{m_k}$ and $\{\tilde\phi_{k,\ell}\}_{\ell=1}^{m_k}$ are the required sequences in Lemma \ref{lem-orth}. This completes the proof of the lemma. 
\end{proof}
We will complete the proof of the first part of Theorem \ref{uniqueness}, that is, the uniqueness determination of the mixture absorption $\mu_a+\mu_f$. To this end, we refer to  the following inverse spectral result in \cite[Corollaries 1.5--1.7]{Canuto04}.

\begin{lemma}\label{p1}
Under the conditions of Theorem \ref{uniqueness}, assume that $\nabla D = \nabla \tilde D$ on boundary $\partial\Omega$ holds. Then \eqref{eq-Theta} and \eqref{t1m} imply $(D, \mu_a+\mu_f) = (\tilde D, \tilde \mu_a + \tilde \mu_f)$.
\end{lemma}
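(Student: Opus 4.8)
The plan is to recognize that the three ingredients now in hand, namely the equality of eigenvalues $\lambda_k=\tilde\lambda_k$ from Lemma \ref{lem-Theta}, the equality of algebraic multiplicities $m_k=\tilde m_k$, and the matched weighted Neumann traces $D\,\partial_\nu\phi_{k,\ell}=\tilde D\,\partial_\nu\tilde\phi_{k,\ell}$ on all of $\partial\Omega$ from Corollary \ref{lem-orth}, together constitute the complete \emph{boundary spectral data} of the two elliptic operators $A_e$ and $\tilde A_e$, and then to invoke the abstract inverse spectral uniqueness theorem of Canuto--Kavian. Thus the whole lemma amounts to verifying that our assembled data $\{(\lambda_k,\{D\,\partial_\nu\phi_{k,\ell}\}_\ell)\}_k$ and $\{(\tilde\lambda_k,\{\tilde D\,\partial_\nu\tilde\phi_{k,\ell}\}_\ell)\}_k$ coincide and form exactly the input required by that theory, the extension from the sub-boundary $\Gamma\times\Gamma_{\rm in}'$ of \eqref{eq-Theta} and \eqref{t1m} to the entire boundary being guaranteed by unique continuation for the elliptic eigenfunctions.

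First I would reinterpret this boundary spectral data as the statement that the two operators share the same Dirichlet-to-Neumann family on the full boundary, viewing $\xi\mapsto\Lambda_\xi$ as a meromorphic operator-valued function whose poles are the eigenvalues and whose residues are built from the weighted normal derivatives of the eigenfunctions; this correspondence is already prepared in Lemmas \ref{lem-Lambda} and \ref{lem-Theta}. From the coincidence of this family I would extract, by the classical boundary-identification argument for the Calderón-type problem, the equality $D=\tilde D$ on $\partial\Omega$. Combined with the a priori hypothesis $\nabla D=\nabla\tilde D$ on $\partial\Omega$, this fixes $D$ together with its full first-order jet on the boundary, which is precisely the normalization under which the diffusion coefficient and the potential $q:=\mu_a+\mu_f$ can be separated.

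With the boundary normalization secured, I would then apply the inverse spectral result \cite[Corollaries 1.5--1.7]{Canuto04} directly: identical boundary spectral data for $-\mathrm{div}(D\nabla\cdot)+q$ and $-\mathrm{div}(\tilde D\nabla\cdot)+\tilde q$, together with the agreement of $D$ and $\nabla D$ on $\partial\Omega$, forces $D=\tilde D$ and $q=\tilde q$ throughout $\Omega$, that is $(D,\mu_a+\mu_f)=(\tilde D,\tilde\mu_a+\tilde\mu_f)$, as claimed. The only preliminary verification is that our coefficients meet the regularity hypotheses of \cite{Canuto04} and that the matched eigenbases $\{\phi_{k,\ell}\}$, $\{\tilde\phi_{k,\ell}\}$ produced in Corollary \ref{lem-orth} are those entering their formulation.

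The hard part will be the second step, the disentangling of the diffusion coefficient from the potential. The operator $-\mathrm{div}(D\nabla\cdot)+q$ carries a genuine gauge ambiguity, since the Liouville substitution $u\mapsto D^{1/2}u$ reduces it to a Schrödinger operator and redistributes information between $D$ and $q$, so boundary spectral data \emph{alone} cannot separate the two coefficients. Breaking this gauge is exactly the role of the extra hypothesis $\nabla D=\nabla\tilde D$ on $\partial\Omega$, and the delicate point is to confirm that the reduction to \cite{Canuto04} genuinely respects this boundary normalization rather than determining $D$ only up to the Liouville gauge.
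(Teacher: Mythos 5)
Your proposal follows essentially the same route as the paper: the paper gives no independent proof of this lemma at all, but states it precisely as the inverse spectral uniqueness result of \cite[Corollaries 1.5--1.7]{Canuto04}, applied to the boundary spectral data $\{\lambda_k, m_k, D\,\partial_\nu\phi_{k,\ell}\}$ assembled in Lemma \ref{lem-Theta} and Corollary \ref{lem-orth}, with the hypothesis $\nabla D=\nabla\tilde D$ on $\partial\Omega$ serving exactly the gauge-breaking role you identify. Your additional remarks on the Liouville gauge and on recovering $D|_{\partial\Omega}$ from the Dirichlet-to-Neumann family are sound elaborations of why the cited theorem applies, not a deviation from the paper's argument.
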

Applying this lemma, we can show the uniqueness of determining the mixture absorption $\mu_a+\mu_f$.
\subsection{Recovering the background absorption and  fluorescence absorption.}
In the above subsection, we can uniquely determine the diffusion coefficient $D$, the mixture absorption $\mu_{a,f}:=\mu_a+\mu_f$ and $u_e$ by the boundary measurement $D\frac{\partial u_e}{\partial\nu}|_{\Gamma\times(0,T)}$. In this part, by substituting $\mu_f=\mu_{a,f} - \mu_a$, we rewrite the problem \eqref{emission-um} into the following form:
\begin{equation}\label{emission-um’}
\begin{cases}
\begin{aligned}
\partial_t u_m + A_0 u_m &+ \mu_a(x) (u_m + u_e) = \mu_{a,f} u_e, && (x,t)\in\Omega\times (0,T),\\
u_m&=0, && x\in\Omega,\, t=0,\\
u_m&=0, && (x,t)\in\partial\Omega\times(0,T).
\end{aligned}
\end{cases}
\end{equation}
Here the operator $A_0u:= - \text{div}(D\nabla u)$, $u\in H^2(\Omega)$. By letting $U_{e,m}:=u_e+u_m$, and noting that $u_e$ solves the system \eqref{excitation-Ue}, that is $\partial_t u_e + A_0 u_e + \mu_{a,f} u_e =0$, we can see that $U_{e,m}$ reads the system
\begin{equation}\label{emission-uem}
\begin{cases}
\begin{aligned}
\partial_t U_{e,m} + A_0 U_{e,m} + \mu_a(x) U_{e,m} & = 0, && (x,t)\in\Omega\times (0,T),\\
U_{e,m}& = 0, && x\in\Omega,\, t=0,\\
U_{e,m}& = g, && (x,t)\in\partial\Omega\times(0,T).
\end{aligned}
\end{cases}
\end{equation}
Now the second observation $D\frac{\partial u_m}{\partial \nu}$ on $\Gamma\times(0,T)$ becomes 
$$
D\frac{\partial U_{e,m}}{\partial\nu} 
= D\left(\frac{\partial u_e}{\partial \nu} + \frac{\partial u_m}{\partial \nu}\right) \mbox{ on }\Gamma\times(0,T).
$$
We can follow an argument similar to the recovery of the mixture absorption in the above subsection to show that the boundary data $D\frac{\partial U_{e,m}}{\partial\nu}|_{\Gamma\times(0,T)}$ can uniquely determine the background absorption  $\mu_a$ in the system \eqref{emission-uem}. Since the mixture absorption $\mu_{a,f}=\mu_a+\mu_f$ is already determined in the above subsection \ref{subsec-uniqu1}, $\mu_f$ can be calculated by $\mu_f=\mu_{a,f} - \mu_a$.

\section{Numerical inversions.}\label{sec_num}
We will consider the numerical inversions in this section. Since solving the simultaneous inverse problems \eqref{inv-new1}-\eqref{inv-new2} is computationally demanding, we have to make a compromise for the DOT-FDOT problem by selecting the parameters we desire to reconstruct. This work focuses on the determination of both background absorption coefficient $\mu_a$ and fluorescence absorption coefficient $\mu_f$. 

\subsection{The Fr\'echet derivatives of observation operators.}
Recall that $\mu_{a,f}:=\mu_a+\mu_f$. We consider the following initial boundary value problems
\begin{equation}\label{homogeneous-Ue}
\begin{cases}
\begin{aligned}
\left(\partial_t -\Delta +\mu_{a,f}(x)\right)u_e&=0, && (x,t)\in\Omega\times (0,T),\\
u_e&=0, && x\in\Omega, \ t=0,\\
u_e&=g, && (x,t)\in\partial\Omega\times(0,T),
\end{aligned}
\end{cases}
\end{equation}
and
\begin{equation}\label{homogeneous-um}
\begin{cases}
\begin{aligned}
\left(\partial_t-\Delta +\mu_a(x)\right)u_m&=\mu_f(x) u_e, && (x,t)\in\Omega\times (0,T),\\
u_m&=0, && x\in\Omega,\ t=0,\\
u_m&=0, && (x,t)\in\partial\Omega\times(0,T).
\end{aligned}
\end{cases}
\end{equation}
Define $U_{e,m}:=u_e+u_m$. Then it satisfies
\begin{equation}\label{homogeneous-ue-um}
\begin{cases}
\begin{aligned}
\left(\partial_t-\Delta +\mu_a(x)\right)U_{e,m}&=0, && (x,t)\in\Omega\times (0,T),\\
U_{e,m}&=0, && x\in\Omega,\ t=0,\\
U_{e,m}&=g, && (x,t)\in\partial\Omega\times(0,T).
\end{aligned}
\end{cases}
\end{equation}
Introduce the admissible set of unknowns by
\begin{equation}\label{admissible-set}
\mathcal{A}:=\big\{q\in L^2(\Omega): 0\leq q \leq C_+\ \text{a.e.\ in} \ \Omega\big\},
\end{equation}
where $C_+>0$ is a constant. The coming inverse problem is to recover the $\mu_a\in\mathcal{A}$ as well as $\mu_f\in\mathcal{A}$ in \eqref{homogeneous-Ue}-\eqref{homogeneous-um} from the time-dependent sub-boundary data
\begin{equation}\label{constant-measure}
\frac{\partial u_e}{\partial\nu}\big|_{\Gamma\times(0,T)}, \quad  \frac{\partial u_m}{\partial\nu}\big|_{\Gamma\times(0,T)}.
\end{equation}

The above simultaneous inverse problem can be divided into following two steps:
\begin{itemize}
\item  {Step 1}: determine the $\mu_{a,f}$, i.e., $\mu_a+\mu_f$, from the data $\frac{\partial u_e}{\partial\nu}|_{\Gamma\times(0,T)}$;
\item  {Step 2}: separate the $\mu_a$ and $\mu_f$ from the recovery of Step 1 by $\big(\frac{\partial u_e}{\partial\nu}+\frac{\partial u_m}{\partial\nu}\big)\big|_{\Gamma\times(0,T)}$.
\end{itemize}
More precisely, we need to solve the following two operator equations
\begin{equation}\label{operator-G1}
\mathbb{G}_e(\mu_{a,f})=\frac{\partial u_e}{\partial\nu}\big|_{\Gamma\times(0,T)}=:\psi_e,\quad (x,t)\in \Gamma\times(0,T)
\end{equation}
and
\begin{equation}\label{operator-G2}
\mathbb{G}_{e,m}(\mu_a)=\big(\frac{\partial u_e}{\partial\nu}+\frac{\partial u_m}{\partial\nu}\big)\big|_{\Gamma\times(0,T)}=:\psi_{e,m}, \quad (x,t)\in \Gamma\times(0,T),
\end{equation}
where $\mathbb{G}_e: \mu_{a,f}\to \psi_e$ and $\mathbb{G}_{e,m}: \mu_a\to \psi_{e,m}$ are the observation operators, and defined by \eqref{homogeneous-Ue} and \eqref{homogeneous-ue-um} respectively. 

To solve the nonlinear equations \eqref{operator-G1} and \eqref{operator-G2} numerically, the gradient-type iteration and Newton-type iteration are common choices. To use such iterations, the differentiability of the operator ${\mathbb G}_e, {\mathbb G}_{e,m}$ and the computations of its Fr$\acute{\text{e}}$chet derivatives are the crucial points. The differentiability of  ${\mathbb G}_e, {\mathbb G}_{e,m}$ for  $\mu_a, \mu_f\in\mathcal{A}$ can be proven in the same way in \cite{zhang19}. Here we only show the expressions of the Fr\'echet derivatives of ${\mathbb G}_e, {\mathbb G}_{e,m}$ in the following. Given the perturbations of $\mu_{a,f}$ and $\mu_a$ as $\tilde\mu_{a,f}$ and $\tilde\mu_a$ respectively, we can compute the Fr\'echet derivatives of ${\mathbb G}_e, {\mathbb G}_{e,m}$ along the corresponding perturbations as
\begin{equation}\label{w1w2}
\begin{cases}
\begin{aligned}
\mathbb{G}'_e(\mu_{a,f})\tilde\mu_{a,f}&:=\lim_{\epsilon\to 0}\frac{\mathbb{G}_e(\mu_{a,f}+\epsilon \tilde\mu_{a,f})-\mathbb{G}_e(\mu_{a,f})}{\epsilon}= \frac{\partial w_e}{\partial\nu} \big|_{\Gamma\times (0,T)}, \\
 \mathbb{G}'_{e,m}(\mu_a)\tilde\mu_a&:=\lim_{\epsilon\to 0}\frac{\mathbb{G}_{e,m}(\mu_a+\epsilon \tilde\mu_a)-\mathbb{G}_{e,m}(\mu_a)}{\epsilon}= \frac{\partial w_{e,m}}{\partial\nu} \big|_{\Gamma\times (0,T)}, 
\end{aligned}
\end{cases}
\end{equation}
where $w_e$ and $w_{e,m}$ satisfy the initial--boundary value problems
\begin{equation}\label{gradient1}
\begin{cases}
\begin{aligned}
\left(\partial_t - \Delta +\mu_{a,f}(x) \right) w_e&=-\tilde\mu_{a,f}(x) u_e[\mu_{a,f}], && (x,t)\in\Omega\times (0,T),\\
w_e&=0, && x\in\Omega,\ t=0,\\
w_e&=0, && (x,t)\in\partial\Omega\times(0,T),
\end{aligned}
\end{cases}
\end{equation}
and
\begin{equation}\label{gradient2}
\begin{cases}
\begin{aligned}
\left(\partial_t -\Delta +\mu_a(x) \right) w_{e,m}&=-\tilde\mu_a(x) U_{e,m}[\mu_a], && (x,t)\in\Omega\times (0,T),\\
w_{e,m}&=0, && x\in\Omega,\ t=0,\\
w_{e,m}&=0, && (x,t)\in\partial\Omega\times(0,T).
\end{aligned}
\end{cases}
\end{equation}

For the exact data $\psi_e$ and $\psi_m$, the equations \eqref{operator-G1} and \eqref{operator-G2} admit unique solutions from Theorem \ref{uniqueness}. However, in practice, we can only have the noisy data $\psi_e^{\delta_e}$ and $\psi_{e,m}^{\delta_{e,m}}$ satisfying ${\|\psi_e^{\delta_e}-\psi_e\|_2}/{\|\psi_e\|_2}\le \delta_e$ and ${\|\psi_{e,m}^{\delta_{e,m}}-\psi_m\|_2}/{\|\psi_m\|_2}\le \delta_{e,m}$ respectively, where $\delta_e, \, \delta_{e,m}\ge 0$ are the noise levels. Then, solving \eqref{operator-G1} and \eqref{operator-G2} are equivalent to solve the operator equation of the form
\begin{equation}\label{uniform-IP}
\mathbb{G}(q)=\psi^\delta,
\end{equation}
where $\mathbb{G}=\mathbb{G}_e, q=\mu_{a,f}, \psi^\delta=\psi_e^{\delta_e}$ for \eqref{operator-G1} and $\mathbb{G}=\mathbb{G}_{e,m}, q=\mu_a, \psi^\delta=\psi_{e,m}^{\delta_{e,m}}$ for \eqref{operator-G2}. Due to the ill-posedness of these inverse problems, we will use the regularized inversion algorithms. 

The gradient-type iteration such as Landweber iteration \cite{Hanke_1995} and the Newton-type iteration such as Levenberg–Maquardt method  \cite{hanke1997regularizing}, the iteratively regularized Gauss-Newton method \cite{JinQ_2013,Kaltenbacher_2015} are common regularization methods to solve \eqref{uniform-IP}.  For instance, the nonlinear Landweber iteration in Hilbert space is defined via
\begin{equation}
q_{n+1}^\delta =q_n^\delta +\big(\mathbb{G}'(q_n^\delta)\big)^*(\psi^\delta-\mathbb{G}(q_n^\delta)),
\end{equation}
where $(\mathbb{G}')^*$ means the conjugate operator of $\mathbb{G}'$, the Levenberg–Maquardt method is defined via
\begin{equation}\label{LM-scheme}
q_{n+1}^\delta =q_n^\delta+\big[(\mathbb{G}'(q_n^\delta))^*\mathbb{G}'(q_n^\delta)+\alpha_n I\big]^{-1}\big(\mathbb{G}'(q_n^\delta)\big)^*(\psi^\delta-\mathbb{G}(q_n^\delta)),
\end{equation}
and the iteratively regularized Gauss–Newton method is defined via
\begin{equation}\label{GN-scheme}
q_{n+1}^\delta =q_n^\delta+\big[(\mathbb{G}'(q_n^\delta))^*\mathbb{G}'(q_n^\delta)+\alpha_n I\big]^{-1} \big(\big(\mathbb{G}'(q_n^\delta)\big)^*(\psi^\delta-\mathbb{G}(q_n^\delta))+\alpha_n(q_0-q_n^\delta)\big).
\end{equation}

The Landweber iteration is quite slow. As an alternative to Landweber-type methods, the Levenberg–Maquardt method or the iteratively regularized Gauss–Newton method requires less number of iterations to satisfy the stopping rule than the Landweber iteration, however each update step of those iterations might take considerably longer than one step of Landweber iteration. In fact, the operator $[(\mathbb{G}'(q_n^\delta))^*\mathbb{G}'(q_n^\delta)+\alpha_n I]^{-1}$ in \eqref{LM-scheme} or \eqref{GN-scheme} has to be solved in practical applications,  implying a huge linear system of equations has to be solved, which often proves to be costly, if not infeasible. Therefore, it may be more desirable to accelerate Landweber iteration by preserving its simple implementation feature. We will introduce an accelerated Landweber iteration scheme in the next subsection.  

\subsection{Nesterov acceleration of Landweber iteration.}
Suppose that $\Omega$ contains a finite number of inclusions (fluorophores) $\{E_j\}_{j=1}^m$, and denote the collection of inclusions by $E=\cup_{j=1}^m E_j$, and assume that there exist constants $d_1, d_2>0$ such that 
\begin{equation*}
{\rm dist}(E,\partial\Omega)>d_1, \; {\rm dist}(E_i,E_j)>d_2\ \text{for}\   1\leq i\ne j\leq m.
\end{equation*}
The unknown $\mu_f$ is a non-smooth function supported on $E$. Furthermore, suppose each inclusion $E_j$ has the fluorescence intensity $\mu_j$. Then we have the fluorescence absorption of the form
\begin{eqnarray*}
\mu_f=
\begin{cases}
\mu_j, &x\in E_j,\; j=1,2,\cdots,m,\\
0, &x\in\Omega\setminus\overline{E}.
\end{cases}
\end{eqnarray*}
The boundary of $E_j$ denotes the shape of $j$-th fluorophore (object) inside the medium. Therefore, when to identify $\mu_{a,f}$,  $\mu_a$ and $\mu_f$ by solving the nonlinear ill-posed equations \eqref{operator-G1} and \eqref{operator-G2}, the interfaces of $\mu_f$ should be kept. 

However, the classical Landweber iteration in Hilbert spaces and the classical $L^2$ penalty term in Tikhonov regularization have the tendency to over-smooth solutions, which makes it difficult to capture special features of the sought solution such as sparsity and discontinuity \cite{Zhong2019}. To overcome this drawback, various regularization schemes in Banach spaces or in a manner of incorporating general non-smooth convex penalty terms have been proposed \cite{Bot2012,Jin_2013,Kaltenbacher_2009}. Further, a so called two-point gradient (TPG) method was proposed in \cite{Hubmer_2017}, which can be considered as some generalization of an accelerated version of Landweber iteration based on Nesterov’s strategy. Some convergence results of this method were also concerned in \cite{Hubmer_2017,Hubmer_2018,Zhong2019}.  Due to the usefulness and acceleration of TPG method in dealing with nonlinear ill-posed problems, we will apply this method to solve our inverse problems.

Now we give a sketch for solving \eqref{uniform-IP} by TPG method for the completeness of this article, which essentially consists of considering
\begin{equation*}
\min_{q\in \mathcal{A}} \mathcal{J}(q), \quad \min_{q\in \mathcal{A}} \mathcal{R}(q)
\end{equation*}
alternatively in the iteration process, where the date-match functional $\mathcal{J}(q)$ is given by 
\begin{equation}
\mathcal{J}(q):=\frac{1}{2}\|\mathbb{G}(q)-\psi^\delta\|_2^2,
\end{equation}
and $\mathcal{R}$ is the $L^2$ combined with TV penalty term given by
\begin{equation}\label{penalty-G1}
\mathcal{R}(q)=\beta_1\|q\|_2^2 +\beta_2|q|_{\rm TV}.
\end{equation}
Pick $q_{-1}=q_0\in \mathcal{A}$ and $\xi_{-1}=\xi_0\in \partial \mathcal{R}(q_0)$ as the initial guesses, where $\partial\mathcal{R}(q):=\big\{\xi\in L^2(\Omega): \mathcal{R}(\tilde q)-\mathcal{R}(q)-\langle \xi,\tilde q-q\rangle \ge 0 \ \text{for all} \ \tilde q\in L^2(\Omega)\big\}$. For $n\ge 0$, we define 
\begin{equation}\label{algorithm}
\begin{cases}
\begin{aligned}
\eta_n&=\xi_n+\lambda_n(\xi_n-\xi_{n-1}),\\
z_n&=\arg \min_{z\in L^2(\Omega)} \big\{\mathcal{R}(z)-\langle \eta_n,z \rangle\big\},\\
\xi_{n+1}&=\xi_n+\alpha_n \big(\mathbb{G}'(z_n)\big)^*(\psi^\delta-\mathbb{G}(z_n)),\\
q_{n+1}&=\arg \min_{q\in L^2(\Omega)} \big\{\mathcal{R}(q)-\langle \xi_{n+1},q \rangle\big\},\\
\end{aligned}
\end{cases}
\end{equation}
where $\lambda_n\ge 0$ is the combination parameter, $\alpha_n$ is the step sizes defined by
\begin{equation*}
\alpha_n=
\begin{cases}
\min\left\{\frac{\bar\alpha_0\|\mathbb{G}(z_n)-\psi^\delta\|_2^2}{\|  (\mathbb{G}'(z_n))^*(\mathbb{G}(z_n)-\psi^\delta) \|_2^2}, \bar\alpha_1\right\} &\text{if} \ \|\mathbb{G}(z_n)-\psi^\delta\|_2>\tau\delta,\\
0 &\text{if} \ \|\mathbb{G}(z_n)-\psi^\delta\|_2 \leq\tau\delta,
\end{cases}
\end{equation*}
for some positive constants $\bar\alpha_0$ and $\bar\alpha_1$.  The combination parameter $\lambda_n\ge 0$ satisfies $\lambda_0=0, \ \lambda_n\in [0,1]$ for all $n\in \mathbb{N}$. Further, the iteration \eqref{algorithm} will be stopped by the discrepancy principle with respect to $z_n^\delta$ i.e., for a given $\tau>1$, we will terminate the iteration after $n_*$ steps, where $n_*:=n(\delta,\psi^\delta)$ is the integer such that
\begin{equation}\label{stop-rule}
\|\mathbb{G}(z_{n_*})-\psi^\delta\|_2 \leq \tau\delta <\|\mathbb{G}(z_{n_*-1})-\psi^\delta\|_2, \quad 0\leq n<n_*.
\end{equation}
Unless otherwise specified, we let the combination parameter $\lambda_n$ be chosen by $\lambda_n=\frac{n}{n+10}$ in this paper,  which leads to a refined version of the Nesterov acceleration of Landweber iteration \cite{Zhong2019}.
\begin{remark}\label{TPG-convergence}
Let $q^\dag\in\mathcal{A}$  be the unique solution of \eqref{uniform-IP} with $\delta=0$. Let the initial guesses $q_0\in\mathcal{A}$ and $\xi_0\in\partial\mathcal{R}(q_0)$ satisfy $\mathcal{R}(q^\dag)-\mathcal{R}(q_0)- \langle \xi_0,q^\dag-q_0 \rangle \leq c_0\rho^2$ for some $\rho>0$. Moreover, let $n_*= n_*(\delta, \psi^\delta)$ be chosen according to the stopping rule \eqref{stop-rule} with $\tau>1$. If the combination parameter $\lambda_n^\delta$ in \eqref{algorithm} is chosen by the discrete backtracking search algorithm \cite{Zhong2019} and the observation operators $\mathbb{G}$ satisfy the tangential condition
\begin{equation}\label{tangential}
\| \mathbb{G}(q)-\mathbb{G}(\tilde q)-\mathbb{G}'(\tilde q)(q-\tilde q)\|_2 \leq \eta \| \mathbb{G}(q)-\mathbb{G}(\tilde q) \|_2, \ \ \text{for all} \  \ q,\tilde{q}\in\mathcal{A},
\end{equation}
there holds $\|q_{n_*}^\delta-q^\dag\|_2\to 0$ as $\delta\to 0$, where $q_{n_*}^\delta$ is the result of \eqref{algorithm} at the $n_*$-th iteration.
\end{remark}

\subsection{Numerical examples.}
We will provide several examples for the implementation of the iterative algorithm.  To run the iteration scheme \eqref{algorithm} for solving our inverse problems, we need to derive $\big(\mathbb{G}_e'(\mu_{a,f})\big)^*$ and $\big(\mathbb{G}_{e,m}'(\mu_a)\big)^*$ respectively. By the definitions of $w_e$ and $w_{e,m}$ in \eqref{w1w2}, we have
\begin{equation}\label{G1G2}
\begin{cases}
\begin{aligned}
\big\langle \mathbb{G}_e'(\mu_{a,f})\tilde\mu_{a,f},\gamma \big\rangle_{L^2(\Gamma\times(0,T))}  &= \int_{\Gamma\times(0,T)}\frac{\partial w_e[\tilde\mu_{a,f}]}{\partial\nu}(x,t)\gamma(x,t)\,dxdt,\\
\big\langle \mathbb{G}_{e,m}'(\mu_a)\tilde\mu_a,\gamma \big\rangle_{L^2(\Gamma\times(0,T))}  &=  \int_{\Gamma\times(0,T)} \frac{\partial w_{e,m}[\tilde\mu_a]}{\partial \nu}(x,t)\gamma(x,t)\,dxdt,
\end{aligned}
\end{cases}
\end{equation}
where $w_e$ and $w_{e,m}$ are the solutions to \eqref{gradient1}-\eqref{gradient2} respectively. Then we can have the representations of $(\mathbb{G}_e'(\mu_{a,f}))^*$ and $(\mathbb{G}_{e,m}'(\mu_f))^*$ by  the identities  
\begin{equation}\label{rela-GG}
\begin{cases}
\begin{aligned}
\big\langle \mathbb{G}_e'(\mu_{a,f})\tilde\mu_{a,f}, \gamma \big\rangle_{L^2(\Gamma\times(0,T))}  &\equiv  \big\langle \tilde\mu_{a,f}, (\mathbb{G}_e'(\mu_{a,f}))^*\gamma \big\rangle_{L^2(\Omega)},\\
\big\langle \mathbb{G}_{e,m}'(\mu_a)\tilde\mu_a, \gamma \big\rangle_{L^2(\Gamma\times(0,T))}  &\equiv  \big\langle \tilde\mu_a, (\mathbb{G}_{e,m}'(\mu_f))^*\gamma \big\rangle_{L^2(\Omega)},
\end{aligned}
\end{cases}
\end{equation}
 for all $\mu_a, \mu_f\in \mathcal{A}$ and $\gamma\in L^2(\Gamma\times(0,T))$. We state the results as the following lemma.
\begin{lemma}
There hold the variational identities given by
\begin{equation}\label{G1-adjoint}
(\mathbb{G}_e'(\mu_{a,f}))^*\gamma=-\int_0^Tu_e[\mu_{a,f}]\phi_e[\gamma] \,dt,
\end{equation}
and
\begin{equation}\label{G2-adjoint}
(\mathbb{G}_{e,m}'(\mu_a))^*\gamma=-\int_{0}^T \big(u_m[\mu_a]+u_e[\mu_{a,f}]\big)\phi_{e,m}[\gamma] \,dt,
\end{equation}
where $\phi_e[\gamma]$ and $\phi_{e,m}[\gamma]$ are the solutions to the initial boundary value problems
\begin{equation}\label{adjoint-gradient1}
\begin{cases}
\begin{aligned}
\left(-\partial_t - \Delta +\mu_{a,f}(x) \right) \phi_e &=0, \qquad\qquad\qquad x\in\Omega, \, t\in(0,T),\\
\phi_e&= 0,  \qquad\qquad\qquad x\in \Omega, \,t=T,\\
\phi_e &=
\begin{cases}
\gamma(x,t),  &  \qquad\; x\in\Gamma, \, t\in (0,T),\\
0, & \qquad\; x\in\partial\Omega\setminus\Gamma, \, t\in (0,T),
\end{cases}
\end{aligned}
\end{cases}
\end{equation}
and
\begin{equation}\label{adjoint-gradient2}
\begin{cases}
\begin{aligned}
\left(-\partial_t - \Delta +\mu_a(x)\right) \phi_{e,m} &=0, \qquad\qquad\qquad x\in\Omega, \, t\in(0,T),\\
\phi_{e,m}&= 0,  \qquad\qquad\qquad x\in \Omega,\, t=T,\\
\phi_{e,m} &=
\begin{cases}
\gamma(x,t),  & \qquad\; x\in\Gamma, \, t\in (0,T),\\
0, & \qquad\; x\in\partial\Omega\setminus\Gamma, \, t\in (0,T).
\end{cases}
\end{aligned}
\end{cases}\end{equation}
\end{lemma}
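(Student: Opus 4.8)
The plan is to prove both variational identities by the standard adjoint-state (Lagrangian) method: test each sensitivity equation against a backward-in-time adjoint state and integrate by parts in space and time, arranging the boundary conditions so that the only surviving boundary contribution is the pairing against $\gamma$ on $\Gamma\times(0,T)$. Since \eqref{G1-adjoint} and \eqref{G2-adjoint} share the same structure, I would carry out the computation in full for $(\mathbb{G}_e'(\mu_{a,f}))^*$ and then transfer it verbatim to $(\mathbb{G}_{e,m}'(\mu_a))^*$, the only change being that $U_{e,m}=u_e+u_m$ from \eqref{homogeneous-ue-um} replaces $u_e$.

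First I would recall the defining relation \eqref{rela-GG}, which characterizes $(\mathbb{G}_e'(\mu_{a,f}))^*\gamma$ as the unique $L^2(\Omega)$ representative of the linear functional $\tilde\mu_{a,f}\mapsto\langle\mathbb{G}_e'(\mu_{a,f})\tilde\mu_{a,f},\gamma\rangle_{L^2(\Gamma\times(0,T))}$. By \eqref{G1G2} this functional equals $\int_{\Gamma\times(0,T)}\frac{\partial w_e}{\partial\nu}\,\gamma\,dx\,dt$, with $w_e=w_e[\tilde\mu_{a,f}]$ the solution of the sensitivity problem \eqref{gradient1}. The whole task therefore reduces to rewriting this boundary integral as a volume integral against $\tilde\mu_{a,f}$.

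Next I would introduce the adjoint state $\phi_e=\phi_e[\gamma]$ solving the backward problem \eqref{adjoint-gradient1} and evaluate $\int_0^T\int_\Omega\big[(\partial_t-\Delta+\mu_{a,f})w_e\big]\phi_e\,dx\,dt$ in two ways. Using the source term in \eqref{gradient1}, it equals $-\int_0^T\int_\Omega\tilde\mu_{a,f}\,u_e\,\phi_e\,dx\,dt$. Integrating by parts in $t$ (the endpoint terms drop out because $w_e(\cdot,0)=0$ and $\phi_e(\cdot,T)=0$) and applying Green's second identity in $x$ transfers the operator onto $\phi_e$ in its formal adjoint form $-\partial_t-\Delta+\mu_{a,f}$; since $\phi_e$ solves the homogeneous adjoint equation, the volume term disappears. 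In the boundary term $\int_0^T\int_{\partial\Omega}\big(\phi_e\frac{\partial w_e}{\partial\nu}-w_e\frac{\partial\phi_e}{\partial\nu}\big)dS\,dt$ the homogeneous Dirichlet data $w_e|_{\partial\Omega}=0$ annihilates the second summand, while $\phi_e=\gamma$ on $\Gamma$ and $\phi_e=0$ on $\partial\Omega\setminus\Gamma$ collapse the first to an integral over $\Gamma\times(0,T)$. Equating the two evaluations and using Fubini to identify $\int_0^T u_e\phi_e\,dt$ as the sought $L^2(\Omega)$ representative gives \eqref{G1-adjoint}; repeating the computation with $\phi_{e,m}$ from \eqref{adjoint-gradient2} and with the source $-\tilde\mu_a U_{e,m}$ of \eqref{gradient2}—so that the integrand becomes $(u_m+u_e)\phi_{e,m}$—delivers \eqref{G2-adjoint}.

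The argument is essentially formal, so the main obstacle is not conceptual but the careful bookkeeping of the space-time boundary terms and of signs: I must ensure the traces $\frac{\partial w_e}{\partial\nu}|_{\Gamma}$ and $\frac{\partial\phi_e}{\partial\nu}|_{\partial\Omega}$ are well defined, that the partial Dirichlet data (equal to $\gamma$ on $\Gamma$, to $0$ on $\partial\Omega\setminus\Gamma$) reduces the boundary pairing to precisely $\Gamma\times(0,T)$, and that the forward/backward time integration by parts leaves no residual initial or terminal contribution. The parabolic regularity needed to legitimize these manipulations is already available from the well-posedness and smoothing results set up earlier (cf. Lemma \ref{l1}), so no new estimates are required beyond tracking the boundary and endpoint terms.
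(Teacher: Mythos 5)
Your proposal is correct and follows essentially the same route as the paper's own proof: test the sensitivity problem \eqref{gradient1} (resp.\ \eqref{gradient2}) against the backward adjoint state $\phi_e[\gamma]$ (resp.\ $\phi_{e,m}[\gamma]$), integrate by parts in time and apply Green's formula in space, use $w_e(\cdot,0)=0$, $\phi_e(\cdot,T)=0$, the homogeneous Dirichlet data of $w_e$ and the partial Dirichlet data of $\phi_e$ to reduce everything to the pairing over $\Gamma\times(0,T)$, and then identify the adjoint through \eqref{G1G2} and \eqref{rela-GG}. You even track the sign correctly, yielding the minus sign in \eqref{G1-adjoint}, which the last displayed line of the paper's proof drops (a typo there, since its own identity \eqref{sun3-8} forces the minus sign in the statement).
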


\begin{proof}
Multiplying two sides of the equation in \eqref{gradient1} by $\phi_e(x,t):=\phi_e[\gamma](x,t)$ and integrating in $\Omega\times(0,T)$, we get
\begin{eqnarray}\label{sun3-5}
\int_{\Omega\times(0,T)} \left(\partial_t - \Delta +\mu_{a,f} \right) w_e \phi_e \,dxdt=
-\int_{\Omega\times(0,T)} \tilde\mu_{a,f} u_e[\mu_{a,f}] \phi_e \,dxdt.
\end{eqnarray}

By $\phi_e(x,T)=0$, $w_e(x,0)=0$ and $\phi_e, w_e\in L^2 (0,T;H^2(\Omega))$, we have
\begin{equation}
\begin{aligned}
\int_{\Omega\times(0,T)}\ \partial_t w_e \,  \phi_e\,  dx dt &={\int_{\Omega} (w_e \phi_e)|_{t=0}^{t=T}\; dx}-\int_{\Omega\times(0,T)} w_e{\partial_t} \phi_e \, dx dt \\
&= -\int_{\Omega\times(0,T)} w_e{\partial_t} \phi_e \, dx dt.
\end{aligned}
\end{equation}
We also have from the Green's formula that
\begin{equation}\label{sun3-7}
\begin{aligned}
\int_{\Omega\times(0,T)}  (- \Delta +\mu_{a,f} )w_e \phi_e\,dxdt
&=\int_{\Omega\times(0,T)}  (- \Delta +\mu_{a,f} )\phi_e w_e \, dxdt \\
&\qquad + \int_0^T\int_{\partial\Omega} \left(\phi_e\frac{\partial w_e}{\partial\nu} -w_e\frac{\partial \phi_e}{\partial\nu}\right) \, dxdt.
\end{aligned}
\end{equation}
Combining \eqref{sun3-5}-\eqref{sun3-7} together and using \eqref{adjoint-gradient1}, we finally have
\begin{equation}\label{sun3-8}
\begin{aligned}
-\int_{\Omega\times(0,T)} \tilde\mu_{a,f} \, u_e[\mu_{a,f}] \,\phi_e[\gamma] \,dxdt
&= \int_0^T \int_{\partial\Omega} \left(\phi_e\frac{\partial w_e}{\partial\nu} -w_e\frac{\partial \phi_e}{\partial\nu}\right) \, dxdt \\
&= \int_0^T \int_{\Gamma} \frac{\partial w_e[\tilde\mu_{a,f}]}{\partial\nu} \gamma dxdt.
\end{aligned}
\end{equation}
Then, \eqref{G1G2} together with \eqref{rela-GG} lead to $(\mathbb{G}_e'(\mu_{a,f}))^*\gamma =\int_0^Tu_e[\mu_{a,f}]\phi_e[\gamma] \,dxdt$. In the same way, we can derive the expression of $(\mathbb{G}_m'(\mu_a))^*\gamma$ which is given as in \eqref{G2-adjoint}. The proof is complete.
\end{proof}

For the noisy data $\psi_{e}^{\delta_e}$ and $\psi_{e,m}^{\delta_{e,m}}$, we describe them as 
$$
\psi_{e}^{\delta_{e} } = \psi_{e}(1+\delta_{e}\zeta)
$$
and
\begin{equation}\label{noisydata}
\psi_{e,m}^{\delta_{e,m} }= \psi_{e,m}(1+\delta_{e,m}\zeta),
\end{equation}
where $\delta_{e}, \, \delta_{e,m}\ge0$ are the noise levels and $\zeta$ is a random standard Gaussian noise. Noting the randomness of the measured noise, we use the mean value of the realizations of the measurements \eqref{noisydata}. Considering the difficulty of obtaining the medical data, the amount of realizations can not be too large and we set it be 10 in our experiments. For the numerical examples, we solve the forward problems \eqref{homogeneous-Ue}-\eqref{homogeneous-ue-um} and dual problems \eqref{adjoint-gradient1}-\eqref{adjoint-gradient2}  by the finite difference method and give an uniform mesh with the time mesh size $0.01$. The primal dual hybrid gradient method will be applied to solve \eqref{algorithm} iteratively. We set $\tau=1.05$ in \eqref{stop-rule} to stop the iteration. In the sequel, denote by $\mu_a^\dag,\,\mu_f^\dag,\,\mu_{a,f}^\dag$ the exact solutions and $\mu_a^N,\,\mu_f^N,\,\mu_{a,f}^N$ the recoveries, where $N$ is the final iteration number. 

%

%
Now we are ready to show the performances of above iterative algorithm for recovering the unknown absorption coefficients. Set $T=1$, $\Omega:=\{x=(x_1,x_2)\in \mathbb{R}^2| \,0\leq x_1\leq1, \,0\leq x_2\leq 1\}$ and suppose $u_e(0,x_2,t)=e^{\frac{x_2}{2}-t}, \, u_e(1,x_2,t)=e^{\frac{1+x_2}{2}-t}, \, u_e(x_1,0,t)=e^{\frac{x_1}{2}-t}$ and $u_e(x_1,1,t)=e^{\frac{1+x_1}{2}-t}$.  We consider the numerical inversions for the following two cases:
\begin{itemize}
\item{(1a)} the homogeneous background absorption and the non-smooth fluoresence absorption given by
\begin{eqnarray}\label{muf-smooth-ex1}
\mu_a^\dag\equiv1, \quad \mu_f^\dag=
\begin{cases}
2, &x\in E,\\
0, &{\rm else},
\end{cases}
\end{eqnarray}
where $E$ is a disc with radius $0.2$, centered at $(0.4,0.4)$;
\item{(1b)} the non-smooth background absorption and the non-smooth fluoresence absorption given by
\begin{eqnarray}\label{muf-nonsmooth-ex1}
\mu_a^\dag =
\begin{cases}
2, &x\in E_1,\\
1, &{\rm else},
\end{cases}
\quad  \mu_f^\dag =
\begin{cases}
2, &x\in E_2,\\
0, &{\rm else},
\end{cases}
\end{eqnarray}
where $E_1,\ E_2$ are two discs with radius $0.15$, centered at $(0.3,0.3)$ and $(0.7,0.7)$, respectively.
\end{itemize}

First, set $\beta_1=0.1, \beta_2=1$ and pick the initial guess as $\xi_0=0$. The results of Step 1 (i.e., recovering $\mu_{a,f}:=\mu_a+\mu_f$) from different noisy level data are listed in Table \ref{ex1tab1} and the recovered solutions are plotted in Figure \ref{fig-ex1a-step1} and Figure \ref{fig-ex1b-step1}, respectively.

Next, fix the noise level of boundary measurement $\frac{\partial u_e}{\partial \nu}|_{\Gamma\times(0,T)}$ as $\delta_e=0.1\%$ for Case (1a) and $\delta_e=0.01\%$ for Case (1b). Still pick the initial guess as $\xi_0=0$, the results of Step 2 from the measurement data with different noisy level $\delta_{e,m}$ are listed in Table \ref{ex1tab2} and the recovered solutions are plotted in Figure \ref{fig-ex1a-step2} and Figure \ref{fig-ex1b-step2}, respectively.

\begin{table}[ht]
\centering
\caption{Numerical results of Step 1.}\label{ex1tab1}
\begin{tabular}{|c|c|c|c|c|}
\hline
$\mu_{a,f}^\dag:=\mu_a^\dag+\mu_f^\dag$  & $\delta_e$       &$\|\mathbb{G}_e(\mu_{a,f}^N)-\psi_e^{\delta_e}\|_2$ &$\frac{\|\mu_{a,f}^N-\mu_{a,f}^\dag\|_2}{\|\mu_{a,f}^\dag\|_2}$ & $N$\\
\hline
Case (1a)      & $1\%$  &  1.5485e-5    & 0.1035  & 605\\
& $0.1\%$   &  1.6742e-7     & 0.0725  & 914\\
& $0$   &  1.1929e-7     & 0.0697 & 6588 \\
\hline
Case (1b)      & $0.1\%$  & 1.3534e-6    & 0.1957  & 2373 \\
& $0.01\%$   &   1.9077e-7    &  0.1110  & 4772\\
& $0$   &   8.4075e-8    &  0.0718  &14980\\
\hline
\end{tabular}
\end{table}

\begin{figure}[ht]
\begin{center}
\includegraphics[width=1\textwidth,height=6.8cm]{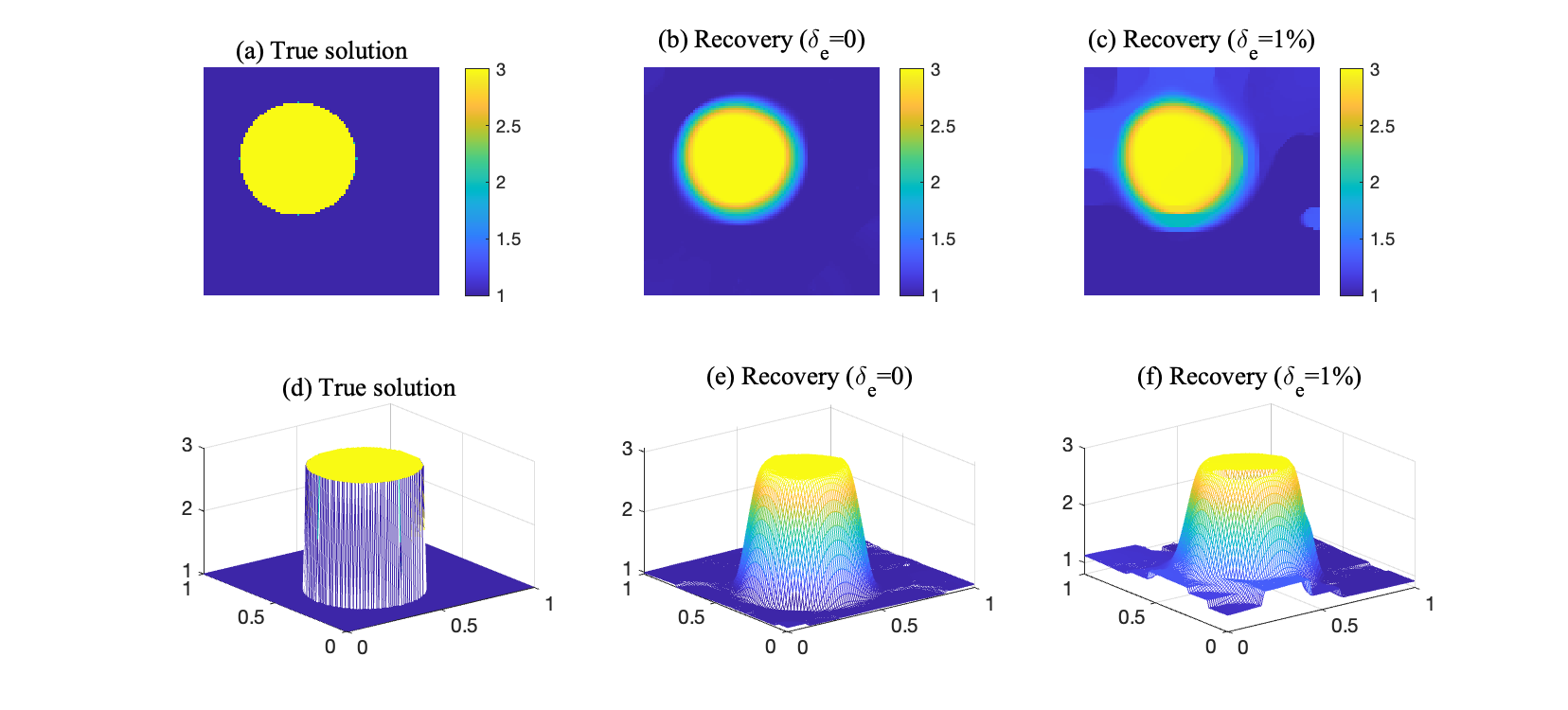}
\caption{The recoveries of $\mu_{a,f}$ in Step 1 for the Case (1a).}\label{fig-ex1a-step1}
\end{center}
\end{figure}

\begin{figure}[ht] 
\begin{center}
\includegraphics[width=1\textwidth,height=6.8cm]{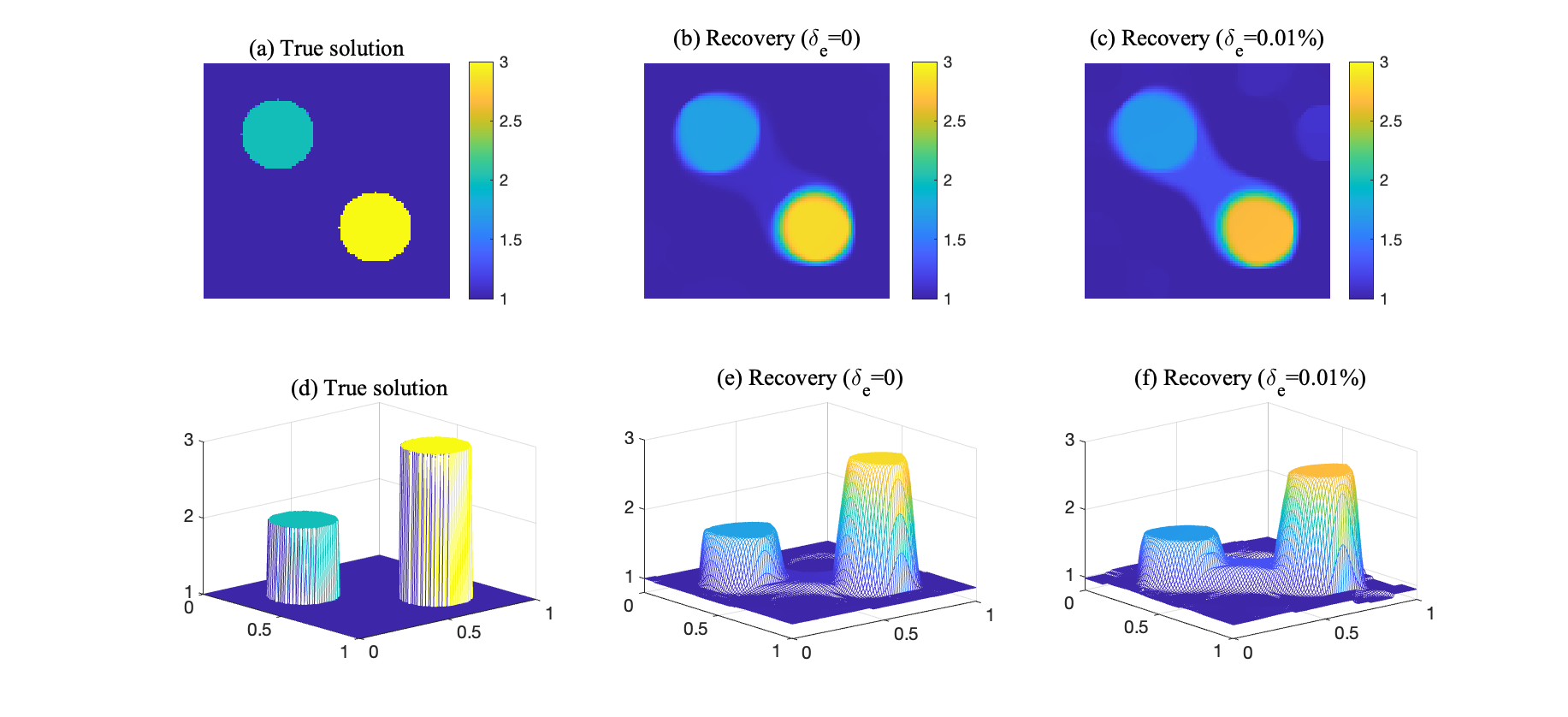}
\caption{The recoveries of $\mu_{a,f}$ in Step 1 for the Case (1b).}\label{fig-ex1b-step1}
\end{center}
\end{figure}

\begin{table}[ht]
\centering
\caption{Numerical results of Step 2 ($\delta_e=0.1\%$).}\label{ex1tab2}
\begin{tabular}{|c|c|c|c|c|c|c|}
\hline
$\mu_a^\dag,\;\mu_f^\dag$  & $\delta_{e}$ & $\delta_{e,m}$      &$\|\mathbb{G}_{e,m}(\mu_f^N)-\psi_{e,m}^{\delta_{e,m}}\|_2$  &$\frac{\|\mu_a^N-\mu_a^\dag\|_2}{\|\mu_a^\dag\|_2}$   &$\frac{\|\mu_f^N-\mu_f^\dag\|_2}{\|\mu_f^\dag\|_2}$ &$N$ \\
\hline
Case (1a)   & $0.1\%$   & $5\%$   &   3.6800e-2   & 0.1315  &0.1931 & 5\\
&  & $1\%$   &   7.4000e-3    &  0.0304 & 0.1344 & 21\\
&  & $0.1\%$   &   7.3593e-4    &  0.0094 & 0.1334 & 129\\
\hline
Case (1b)  & $0.01\%$  & $1\%$  &  1.2439e-5   & 0.1398  & 0.2690 & 597 \\
& & $0.1\%$  &   1.2997e-6 & 0.1111  & 0.2740  & 2262\\
&  & $0.01\%$  &   2.7656e-7  & 0.0794  & 0.2675  & 7950\\
\hline
\end{tabular}
\end{table}

From Tables \ref{ex1tab1}-\ref{ex1tab2} and  Figures \ref{fig-ex1a-step1}-\ref{fig-ex1b-step2}, we see that the recoveries are satisfactory in view of the high ill-posedness of our inverse problems, which is caused by the nonlinearity of this inverse problems and the applied boundary measurements. The algorithm can recover the overall shape of the interface of unknown absorptions. These results indicate that we may recover the piecewise  constant $(\mu_a,\mu_f)$ simultaneously from the boundary data excited by general smooth sources, which is not covered by Theorem \ref{uniqueness} and worth our further investigation. 

However, numerically, due to the non-smoothness, the interfaces of $\mu_{a,f}$, $\mu_a$ or $\mu_f$ are much more challenging to recover and hence less accurately resolved. Especially, as shown in Figure \ref{fig-ex1b-step1}, the recovery of $\mu_{a,f}$ around the part between disc $E_1$ and disc $E_2$ has lower accuracy. Furthermore, as shown in Figure \ref{fig-ex1b-step2}, the recovery error of $\mu_{a,f}$ in Step 1 will influence the recovery accuracy of $\mu_f$ in Step 2, since we need capture the interface of $\mu_f$ by $\mu_{a,f}-\mu_a$. By comparing the results for Case (1a) and Case (1b) as shown in Figures  \ref{fig-ex1a-step2}-\ref{fig-ex1b-step2}, we can see that the non-smooth background absorption and non-smooth fluoresence absorption is more difficult to be accurately separated than the case with homogeneous background absorption. Furthermore,  the numerical results indicate that the algorithm does converge, but it may take many iterations to yield satisfactory recoveries, even if the Nesterov acceleration is applied in the iteration process, reflecting the severe ill-posedness of nonlinear multiple coefficients simultaneous inversion.

\begin{remark}
It is well-known that the convergence and convergence speed of iterative algorithm heavily depend on the initial guess. In above numerical examples, we fixed the initial guess as $\xi_0=0$ and investigated the performance of proposed algorithm. In fact, one can extract the prior information of unknown inclusions $\{E_j\}_{j=1}^m$ by applying our shape-constraint strategy in \cite{Sun19}, which can estimate the positions of unknown inclusions by cuboid approximation. Then we can get a good initial guess for the iterative algorithm in this work and save the number of iteration.
\end{remark}

\begin{figure}[ht] 
\begin{center}
\includegraphics[width=1\textwidth,height=6.8cm]{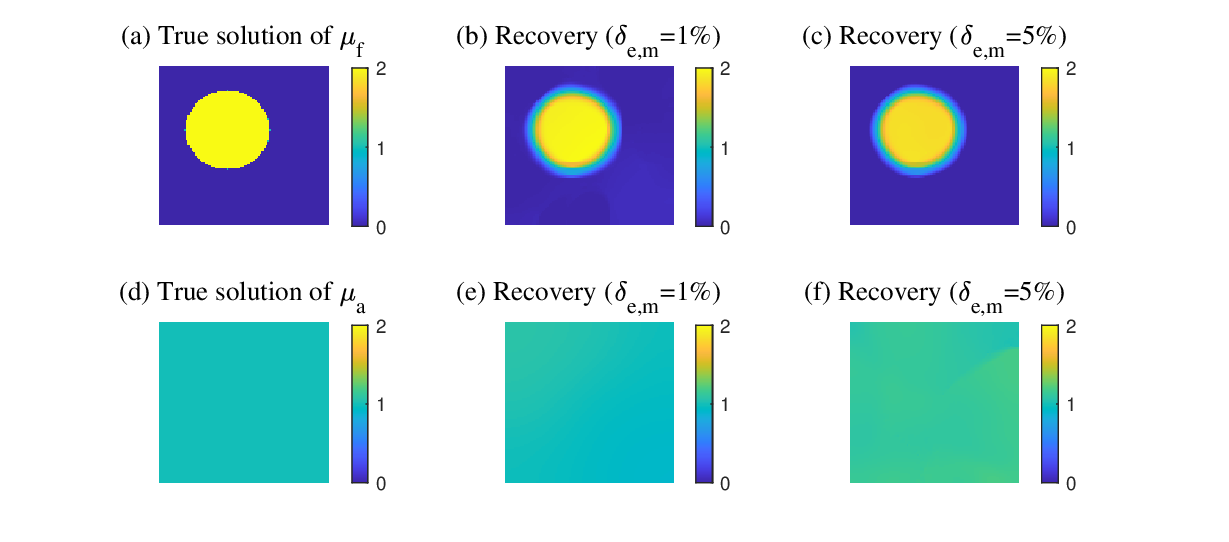}
\caption{The recoveries of $\mu_f$ and $\mu_a$ in Step 2 for the Case (1a).}\label{fig-ex1a-step2}
\end{center}
\end{figure}

\begin{figure}[ht] 
\begin{center}
\includegraphics[width=1\textwidth,height=6.8cm]{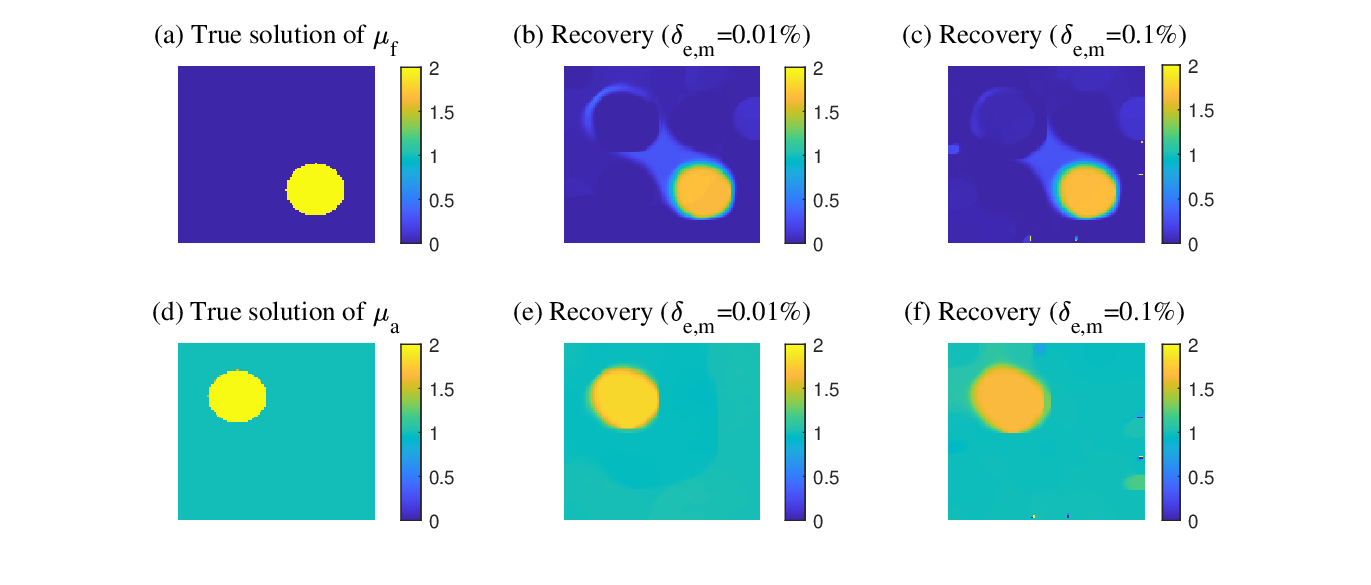}
\caption{The recoveries of Step 2 for the Case (1b).}\label{fig-ex1b-step2}
\end{center}
\end{figure}

\section{Conclusion and future works.}\label{concl}
This work studies a multiple coefficients inversion problem arising from the time-domain diffuse optical tomography with fluorescence, which is to recover the diffusion coefficient, background absorption as well as fluorescence absorption from one single boundary measurement. We deduce the uniqueness theorem and do the numerical inversion by the Nesterov accelerated Landweber iteration. The numerical results are satisfactory in view of the high ill-posedness of our simultaneous inverse problem. However, to make the theory and the numerical experiments in this work more applicable, we will focus on the following two aspects of future works.

Firstly,  in this work, we assumed that the absorption and scattering coefficients for the excitation and emission lights are the same, which leads to the same $D$ and $\mu_a$ in \eqref{excitation-Ue} and \eqref{emission-um}. 
However, in many practical applications,  the absorption and scattering coefficients of the background medium are not same for the lights with different wave lengths. Hence one of our next works is to prove the uniqueness theorem of recovering $(D_e,D_m,\mu_a^e,\mu_a^m,\mu_f)$ for the DOT-FDOT with the governing equations given by
$$
c^{-1}\partial_t u_e -\text{div}(D_e\nabla u_e) +(\mu_a^e+\mu_f)u_e=0,
$$
and
$$
c^{-1}\partial_t u_m -\text{div}(D_m\nabla u_e) +\mu_a^m u_m=\mu_f u_e.
$$
Secondly, to save the computational costs, we only consider the simultaneous inversion of background absorption and fluoresence absorption in the numerical experiments in this work. The fast algorithms of recovering the background absorption, fluoresence absorption as well as the diffusion coefficient are important and worth our further investigations.
\section*{Acknowledgment.} 
Zhiyuan Li thanks National Natural Science Foundation of China (Grant No.12271277). Chunlong Sun thanks National Natural Science Foundation of China (Grant No.12201298), Natural Science Foundation of Jiangsu Province, China (Grant No.BK20210269) and “Double Innovation” Doctor of Jiangsu Province, China (Grant No.JSSCBS20220227). This work is partially supported by the Open Research Fund of Key Laboratory of Nonlinear Analysis \& Applications (Central China Normal University), Ministry of Education, P. R. China.

\bibliographystyle{plainurl} 
\bibliography{ref}
\end{document}